\newcommand{\bm}[1]{\boldsymbol{#1}}
\newcommand{\bmr}[1]{\bm{\mr{#1}}}
\newcommand{\lj}{[ \hspace{-2pt} [}
\newcommand{\rj}{] \hspace{-2pt} ]}
\newcommand{\mb}[1]{\mathbb{#1}}
\newcommand{\mc}[1]{\mathcal{#1}}
\newcommand{\mr}[1]{\mathrm{#1}}
\newcommand{\jump}[1]{\lj #1 \rj}
\newcommand{\aver}[1]{ \{#1\}  }
\newcommand{\enorm}[1]{ |\!|\!| #1 |\!|\!|}
\newcommand{\wt}[1]{{#1}^{\bmr{p}}}
\newcommand{\wh}[1]{\widehat{#1}}
\newcommand\MTh{\mc{T}_h}
\newcommand\MEh{\mc{E}_h}
\newcommand\un{\bm{\mr n}}
\newcommand\curl{\ifmmode \mathrm{curl} \else \text{curl}\fi}
\renewcommand{\d}[1]{\mathrm d \boldsymbol{#1}}
\newcommand{\vecd}[2]{\begin{pmatrix} #1 \\ #2 \\ \end{pmatrix}}
\newtheorem{assumption}{Assumption}
\newtheorem{theorem}{Theorem}
\newtheorem{lemma}{Lemma}
\newtheorem{remark}{Remark}
\title[Sequential Least Squares]{A Sequential Least Squares Method for
  Poisson Equation using A Patch Reconstructed Space}
\author[R. Li]{Ruo Li} \address{CAPT, LMAM and School of Mathematical
  Sciences, Peking University, Beijing 100871, P.R. China}
\email{rli@math.pku.edu.cn}
\author[F.-Y. Yang]{Fanyi Yang} \address{School of Mathematical
  Sciences, Peking University, Beijing 100871, P.R. China}
\email{yangfanyi@pku.edu.cn}
\begin{document}

\maketitle


\begin{abstract}
  We propose a new least squares finite element method to solve the
  Poisson equation. By using a piecewisely irrotational space to
  approximate the flux, we split the classical method into two
  sequential steps. The first step gives the approximation of flux in
  the new approximation space and the second step can use flexible
  approaches to give the pressure. The new approximation space for
  flux is constructed by patch reconstruction with one unknown per
  element consisting of piecewisely irrotational polynomials. The
  error estimates in the energy norm and $L^2$ norm are derived for
  the flux and the pressure. Numerical results verify the convergence
  order in error estimates, and demonstrate the flexibility and
  particularly the great efficiency of our method.

  \noindent \textbf{keywords}: Poisson equation, Patch
  reconstructed, Irrotational polynomial space, Discontinuous
  least squares finite element method.

\end{abstract}


\section{Introduction}
The least squares finite element method (LSFEM) is a sophisticated
technique for solving the partial differential equation. For
second-order elliptic problems, we refer to \cite{Bochev2009least,
  Jiang1993optimal, Bramble1997least, Pehlivanov1994least,
  Aziz1985least}, for the Navier-Stokes problem, we refer to
\cite{Bochev1993accuracy, Bochev2012locally, Lung1994stokes}.  For an
overview of the least squares finite element methods, we refer to
\cite{Bochev1998review} and the references therein. Different from the
Galerkin method, the lease squares method is based on the minimization
of the $L^2$-norm residual over a proper approximation space. An
immediate advantage is the symmetric positive definite resulting
linear system, which has made the method attractive in several
fields. Instantly, one may see the condition number of the resulting
linear system is squared due to the formation of the approximation. To
relieve the curse due to the condition number, one may write the
equation into low order formation. Taking the Poisson equation as an
example, we may introduce a flux variable to write it into the mixed
formation, resulting a system coupled by the flux and pressure. Though
the mixed form is helpful in reducing condition number, more degree of
freedoms(DOF) are introduced to achieve the same accuracy.

Discontinuous Galerkin(DG) methods have received massive attention in
the past two decades due to its great flexibility in mesh partition
and easy implementation of the approximation spaces especially for the
spaces of high order. We refer to the review paper
\cite{arnold2002unified} and the references therein. Using the
approximation space from the DG methods, discontinuous least squares
(DLS) finite element methods have been developed in
\cite{Ye2018discontinuous, Bensow2005discontinuous, Bensow2005div} for
solving the elliptic system. In \cite{Bochev2012locally,
  Bochev2013nonconforming}, the authors extend the DLS finite element
methods to the Stokes problem in velocity-vorticity-pressure form. The
same as the least squares methods using continuous approximation
space, the technique to write the equation into low order system is
adopted in DLS methods either to reduce the condition number of
resulting linear systems. To achieve the high order accuracy,
discontinuous finite element space requires a huge number of degrees
of freedom which leads to a very large linear system
\cite{hughes2000comparison, zienkiewicz2003discontinuous} in
comparison to the methods using continuous approximation spaces. The
coupling of the variables in the mixed form and the increasing of the
number of DOFs make one hard to satisfy with its efficiency.

In this paper, a new least squares finite element method is proposed
to solve the Poisson equation. The novel point is that we split the
solver into two sequential steps. This is motivated from the idea in
\cite{Bensow2005div} to decouple the least-squares-type functional
into two subproblems. In the first step, we approximate the flux still
using a discontinuous approximation space. This space is the piecewise
irrotational polynomial space, which is a generalization of the
reconstructed space proposed in \cite{li2012efficient,
  li2016discontinuous}. The new space is obtained by solving a local
least squares problem based on the irrotational polynomial bases and
only one unknown locates inside each element. With such a space, we
makes the idea in \cite{Bensow2005div} to decouple the flux and the
pressure implementable. For the flux, the optimal error estimate with
respect to the energy norm is derived. We can only prove the
suboptimal convergence rate in $L^2$ norm for the flux until now,
while in numerical experiments we obverse the optimal convergence
behavior for the space of odd order.

Once we get the numerical approximation to the flux, one then could
use the numerical flux to obtain the pressure in a very flexible
manner. As a demonstration, we adopt the standard $C^0$ finite element
space to solve the pressure. We give the error estimates of the
pressure in both energy norm and $L^2$ norm. By a series of numerical
examples, we at first verify the convergence order given in the error
estimate and illustrate the flexibility we inherit from the DG
method. Particularly, by the comparison \cite{hughes2000comparison} of
the number of DOFs used to achieve the same
numerical error, we show that our method has a great saving in DOFs
compared to the standard DLS finite element method. Consequently, by
the decoupling of the flux and the pressure and by the saving in the
number of DOFs, a much better efficiency could be attained by our
method.

The rest of this paper is organized as follows. In Section
\ref{sec:fems}, we review the standard DLS finite element method and
present the corresponding error estimates. In Section
\ref{sec:irrotationbasis}, we introduce a reconstruction operator to
define the piecewise irrotational approximation space and we give the
approximation property of the new space. In Section \ref{sec:lsfem},
the approximation to the flux and the pressure of the Poisson problem
is proposed, and we derive the error estimates for both flux and
pressure in energy norm and $L^2$ norm. In Section
\ref{sec:numericalresults}, we present the numerical examples on
meshes with different geometry to verify the convergence order in the
error estimates. Besides, we make a comparison of number of DOFs
respect to the numerical error between our method and the method in
Section \ref{sec:fems} to show the great efficiency of our method.


\section{Discontinuous Least Squares Finite Element Method}
\label{sec:fems}
Let $\Omega$ be a bounded polygonal domain in $\mathbb{R}^d(d=2, 3)$.
Let $\MTh$ be a partition of $\Omega$ into polygonal (polyhedral)
elements. We denote by $\MEh^i$ the set of interior element faces of
$\MTh$ and by $\MEh^b$ the set of the element faces on the boundary
$\partial \Omega$, thus the set of all element faces $\MEh = \MEh^b
\cup \MEh^i$. The diameter of an element $K$ is denoted by $h_K =
\text{diam}(K)$, $\forall K \in \MTh$ and the size of the face $e$ is
$h_e = |e|$, $\forall e \in \MEh$. We denote $h = h_{\max} = \max_{K
\in \MTh} h_K$. It is assumed that the elements in $\MTh$ are
shape-regular according to the conditions specified in
\cite{antonietti:2013}, which read: {\it there are
\begin{itemize}
  \item two positive numbers $N$ and $\sigma$ which are independent of
    $h$;
  \item a compatible sub-decomposition $\widetilde{\mc T}_h$
    consisting of shape-regular triangles;
\end{itemize}
such that
\begin{itemize}
\item any element $K \in \MTh$ admits a decomposition $\widetilde{\mc
  T}_{h|K}$ which is composed of less than $N$ shape-regular
  triangles;
\item the triangle $\widetilde{K} \in \widetilde{\mc T}_h$ is
  shape-regular in the sense of that the ratio between
  $h_{\widetilde{K}}$ and $\rho_{\widetilde{K}}$ is bounded by
  $\sigma$: $h_{\widetilde{K}} / \rho_{\widetilde{K}} \leq \sigma $
  where $\rho_{\widetilde{K}}$ is the radius of the largest ball
  inscribed in $\widetilde{K}$.
\end{itemize}}
The regularity conditions could lead to some useful consequences
which are easily verified: 
\newcounter{regularity}
\setcounter{regularity}{1}
\begin{enumerate}
\item[M\arabic{regularity}] There exists a positive constant
  $\sigma_s$ such that $\sigma_v h_K \leq h_e$ for any element $K$
  and every edge $e$ of $K$;
  \addtocounter{regularity}{1}
\item[M\arabic{regularity}][{\it trace inequality}] There exists a
  positive constant $C$ such that 
  \begin{equation}
    \|v\|_{L^2(\partial K)}^2 \leq C \left( h_K^{-1} \| v\|_{L^2(
        K)}^2 + h_K \| \nabla v\|_{L^2(K)}^2 \right), \quad \forall v
    \in H^1(K).
    \label{eq:traceinequality}
  \end{equation}
  \addtocounter{regularity}{1}
\item[M\arabic{regularity}][{\it inverse inequality}] There exists a
  positive constant $C$ such that
  \begin{equation}
    \| \nabla v\|_{L^2(K)} \leq Ch_K^{-1} \|v\|_{L^2(K)}, \quad
    \forall v \in \mb P_m(K),
    \label{eq:inverse}
  \end{equation}
  where $\mb P_m(\cdot)$ is the polynomial space of degree $\leq m$.
\end{enumerate}
Next, we introduce the standard trace operators in the discontinuous
Galerkin (DG) framework \cite{arnold2002unified}. Let $v$ be a scalar-
or vector-valued function and $e \in \MEh^i$ shared by two adjacent
elements $K^+$ and $K^-$ with the unit outward normal $\un^+$ and
$\un^-$ corresponding to $\partial K^+$ and $\partial K^-$,
respectively. We define the average operator $\aver{\cdot}$ and the
jump operator $\jump{\cdot}$ as
\begin{displaymath}
  \aver{v} = \frac{1}{2}\left( v|_{K^+} + v|_{K^-} \right), \quad
  \forall e \in \MEh^i,
\end{displaymath}
and
\begin{displaymath}
  \jump{v} = v|_{K^+}\un^+ + v|_{K^-}\un^-, \quad \jump{v \otimes \un}
  = v|_{K^+}\otimes \un^+ + v|_{K^-} \otimes \un^-, \quad \forall e
  \in \MEh^i.
\end{displaymath}
In the case $e \in \MEh^b$, $\aver{\cdot}$ and $\jump{\cdot}$ are
modified as 
\begin{displaymath}
  \aver{v} = v, \quad \jump{v} = v\un, \quad \jump{v \otimes \un} = v
  \otimes \un, \quad \forall e \in \MEh^b,
\end{displaymath}
where $\un$ denotes the unit outward normal to $e$. 

Throughout the paper, let us note that $C$ and $C$ with a subscript
are generic constants which may be different from line to line but are
independent of the mesh size, and we follow the standard definitions
for the spaces: $L^2(D)$, $H^t(D)$, $C^t(D)$, $\bmr{L}^2(D) :=
[L^2(D)]^d$, $\bmr{H}^t(D) = [H^t(D)]^d$, $\bmr{C}^t(D) = [C^t(D)]^d(t
\geq 0)$ and we define 
\begin{displaymath}
  H(\text{curl}^0; D) \triangleq \left\{ \bm v \in \bmr{L}^2(D)\ |\
  \nabla \times \bm v = 0 \right\}.
\end{displaymath}

The problem considered in this article is the Poisson's equation: {\it
seek $u$ such that}
\begin{equation}
  \begin{aligned}
    - \Delta u &= f, \quad \text{in}\ \Omega, \\
    u &= g, \quad \text{on}\ \partial \Omega. \\
  \end{aligned}
  \label{eq:elliptic}
\end{equation}
The first step of usual least squares finite element methods
\cite{Ye2018discontinuous, Bensow2005discontinuous} is to write the
problem \eqref{eq:elliptic} into an equivalent mixed form: {\it seek
$\bmr p$ and $u$ such that
\begin{equation}
  \begin{aligned}
    \bmr p -  \nabla u &= \bm 0, \quad \text{in}\ \Omega, \\
    -\nabla \cdot \bmr p &= f, \quad \text{in}\ \Omega, \\
    u &= g, \quad \text{on}\  \partial \Omega.
  \end{aligned}
  \label{eq:ellipticmixed}
\end{equation}}
In the mixed form, we refer $u$ as the pressure and $\bmr p$ as the
flux later on based on the terminology of the background of this
equation in fluid dynamics. Here we introduce two discontinuous
approximation spaces: $V_h^m$ for the pressure $u$ and $\bmr{W}_h^m$
for the flux $\bmr q$, which are defined as below:
\begin{displaymath}
  \begin{aligned}
    V_h^m &= \left\{ v_h \in L^2(\Omega)\ |\ v_h|_K \in \mb P_m(K), \
    \forall K \in \MTh\right\}, \\
    \bmr{W}_h^m &= \left\{ \bmr{q}_h \in \bmr{L}^2(\Omega) \ |\
    \bmr{q}_h |_K \in \left[ \mb P_m(K) \right]^d, \ \forall K \in
    \MTh \right\}, \\
  \end{aligned}
\end{displaymath}
where $m$ is a positive integer. We equip these two approximation
spaces with the following norms, $\|\cdot\|_{u}$ for $V_h^m$ and
$\|\cdot\|_{\bmr{p}}$ for $\bmr{W}_h^m$, respectively, as
\begin{displaymath}
  \begin{aligned}
    \|v_h\|_{u}^2 \triangleq & \sum_{K \in \MTh} \|\nabla
    v_h\|_{L^2(K)}^2 + \sum_{e \in \MEh} h^{-1} \|
    \jump{v_h}\|_{L^2(e)}^2, \quad \forall v_h \in V_h^m,\\
    \|\bmr{q}_h\|_{\bmr{p}}^2 \triangleq &\sum_{K \in \MTh} \left( \|
    \nabla \cdot \bmr{q}_h\|_{L^2(K)}^2 + \|\bmr{q}_h\|_{L^2(K)}^2
    \right) + \sum_{e \in \MEh^i} h^{-1} \| \jump{\bmr{q_h}}
    \|_{L^2(e)}^2, \quad \forall \bmr{q}_h \in \bmr{W}_h^m. \\
  \end{aligned}
\end{displaymath}

The standard least squares finite element method based on mixed form
\eqref{eq:ellipticmixed} reads \cite{Ye2018discontinuous}: {\it find
  $(u_h, \bmr{p}_h) \in V_h^m \times \bmr{W}_h^m$ such that
\begin{equation}
  J_h(u_h, \bmr{p}_h) = \inf_{ (v_h, \bmr{q}_h) \in V_h^m \times
  \bmr{W}_h^m} J_h(v_h, \bmr{q}_h),
  \label{eq:infdg}
\end{equation}
where $J_h(\cdot, \cdot)$ is the least squares functional which is
defined as 
\begin{equation}
  \begin{aligned}
    J_h(v_h, \bmr{q}_h)& \triangleq \sum_{K \in \MTh} \left( 
    \| \nabla \cdot \bmr{q}_h + f\|_{L^2(K)}^2 + \| \nabla v_h -
    \bmr{q}_h \|_{L^2(K)}^2\right) \\
    & + \sum_{e \in\MEh^i} \frac{1}{h} \| \jump{v_h}\|_{L^2(e)}^2
    + \sum_{e \in \MEh^i} \frac{1}{h} \| \jump{\bmr{q_h} \otimes
    \un} \|_{L^2(e)}^2 + \sum_{e \in \MEh^b} \frac{1}{h} \| v_h - g
    \|_{L^2(e)}^2. \\
  \end{aligned}
  \label{eq:infdgep}
\end{equation}}
To solve the minimization problem \eqref{eq:infdg}, one has its
corresponding variational equation which takes the form: {\it find $( u_h,
\bmr{p}_h) \in V_h^m \times \bmr{W}_h^m$ such that
\begin{equation}
  a_h(u_h, \bmr{p}_h; v_h, \bmr{q}_h) = l_h(v_h, \bmr{q}_h), \quad
  \forall (v_h, \bmr{q}_h) \in V_h^m \times \bmr{W}_h^m, 
  \label{eq:dgweakform}
\end{equation}
where the bilinear form $a_h(\cdot; \cdot)$ and the linear form
$l_h(\cdot)$ are defined by
\begin{displaymath}
  \begin{aligned}
    a_h(u_h, &\bmr{p}_h; v_h, \bmr{q}_h) = \sum_{K \in \MTh} \left(
    \int_K \nabla \cdot \bmr{p}_h \nabla \cdot \bmr{q}_h \d{x} +
    \int_K (\nabla u_h - \bmr{p}_h)(\nabla v_h - \bmr{q}_h) \d{x}
    \right) \\
    &+ \sum_{e \in\MEh^i} \int_e \frac{1}{h} \jump{u} \jump{v} \d{s}
    +  \sum_{e \in \MEh^i} \int_e \frac{1}{h} \jump{\bmr{p}_h \otimes
    \un} \jump{ \bmr{q}_h \otimes \un} \d{s} + \sum_{e \in \MEh^b}
    \int_e \frac{1}{h} u_h v_h \d{s}, \\
  \end{aligned}
\end{displaymath}
and
\begin{displaymath}
  l_h(v_h, \bmr{q}_h) = \sum_{K \in \MTh} \int_K f \nabla \cdot
  \bmr{q}_h \d{x} + \sum_{e \in \MEh^b} \frac{1}{h} \int_e g v_h
  \d{s}.
\end{displaymath}}
The coercivity of the bilinear form $a_h(\cdot; \cdot)$ are given in
\cite[Lemma 3.1]{Ye2018discontinuous} as
\begin{lemma}
  For any $(v_h, \bmr{q}_h) \in V_h^m \times \bmr{W}_h^m$, there
  exists a constant $C$ such that 
  \begin{equation}
    a_h(v_h, \bmr{q}_h; v_h, \bmr{q}_h) \geq C\left( \|v_h\|_{u}^2 +
    \|\bmr{q}_h\|_{\bmr{p}}^2 \right).
    \label{eq:dgcoercivity}
  \end{equation}
  \label{le:dgcoercivity}
\end{lemma}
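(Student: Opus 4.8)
The plan is to set $S:=a_h(v_h,\bmr{q}_h;v_h,\bmr{q}_h)$, observe that this is exactly the least squares functional $J_h(v_h,\bmr{q}_h)$ evaluated with $f=0$ and $g=0$, and then prove $\|v_h\|_{u}^2+\|\bmr{q}_h\|_{\bmr{p}}^2\le CS$ with $C$ independent of $h$. Several contributions are immediate: $\sum_{K\in\MTh}\|\nabla\cdot\bmr{q}_h\|_{L^2(K)}^2$ appears in both $S$ and $\|\bmr{q}_h\|_{\bmr{p}}^2$; the two penalty terms of $S$ involving $v_h$ add up to $\sum_{e\in\MEh}h^{-1}\|\jump{v_h}\|_{L^2(e)}^2$, which is the facet part of $\|v_h\|_{u}^2$ (on boundary faces $\jump{v_h}=v_h\un$); and since the scalar normal jump $\jump{\bmr{q}_h}$ is the trace of the tensor jump $\jump{\bmr{q}_h\otimes\un}$, one has $\|\jump{\bmr{q}_h}\|_{L^2(e)}\le C\|\jump{\bmr{q}_h\otimes\un}\|_{L^2(e)}$, so $\sum_{e\in\MEh^i}h^{-1}\|\jump{\bmr{q}_h}\|_{L^2(e)}^2\le CS$. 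Everything therefore reduces to the estimate $\sum_{K\in\MTh}\|\nabla v_h\|_{L^2(K)}^2\le CS$; granting it, $\sum_K\|\bmr{q}_h\|_{L^2(K)}^2\le CS$ follows from $\|\bmr{q}_h\|_{L^2(K)}\le\|\nabla v_h-\bmr{q}_h\|_{L^2(K)}+\|\nabla v_h\|_{L^2(K)}$, and then $\|v_h\|_{u}^2\le CS$ and $\|\bmr{q}_h\|_{\bmr{p}}^2\le CS$ as well.

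For the broken gradient I would write $\sum_K\int_K|\nabla v_h|^2\,\d{x}=\sum_K\int_K\nabla v_h\cdot(\nabla v_h-\bmr{q}_h)\,\d{x}+\sum_K\int_K\nabla v_h\cdot\bmr{q}_h\,\d{x}$ and integrate by parts element by element in the second sum, using the standard DG rearrangement of the facet integrals to obtain $\sum_K\int_K\nabla v_h\cdot\bmr{q}_h\,\d{x}=-\sum_K\int_K v_h\,\nabla\cdot\bmr{q}_h\,\d{x}+\sum_{e\in\MEh^i}\int_e\big(\aver{\bmr{q}_h}\cdot\jump{v_h}+\aver{v_h}\jump{\bmr{q}_h}\big)\,\d{s}+\sum_{e\in\MEh^b}\int_e v_h\,(\bmr{q}_h\cdot\un)\,\d{s}$. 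Each term on the right is then bounded by the Cauchy--Schwarz inequality together with the trace inequality \eqref{eq:traceinequality} and the inverse inequality \eqref{eq:inverse}, which turn the facet $L^2$ norms of $\bmr{q}_h$ and $v_h$ into $h^{-1/2}$-weighted element $L^2$ norms; writing $A^2:=\sum_K\|\nabla v_h\|_{L^2(K)}^2$ this produces a bound of the shape $A^2\le CS^{1/2}\big(A+\|\bmr{q}_h\|_{L^2(\Omega)}+\|v_h\|_{L^2(\Omega)}\big)$. Using $\|\bmr{q}_h\|_{L^2(\Omega)}\le A+S^{1/2}$ and, by the discrete Poincar\'e--Friedrichs inequality for piecewise $H^1$ functions (whose hypothesis is furnished by the boundary penalty term in $S$, which plays the role of a homogeneous Dirichlet condition), $\|v_h\|_{L^2(\Omega)}\le C\|v_h\|_{u}\le C(A+S^{1/2})$, we reach $A^2\le CS^{1/2}(A+S^{1/2})$; absorbing $CS^{1/2}A\le\frac12A^2+CS$ by Young's inequality gives $A^2\le CS$, and the claimed coercivity follows.

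The main obstacle is precisely this middle step: the facet terms produced by the element-wise integration by parts couple $A$, $\|\bmr{q}_h\|_{L^2(\Omega)}$ and $\|v_h\|_{L^2(\Omega)}$, none of which is controlled a priori, so the chain closes only through the discrete Poincar\'e--Friedrichs inequality followed by a careful absorption argument. A secondary, more technical point is to keep track of the powers of $h$ in the facet estimates -- where the shape-regularity consequences M1--M3 are what make the trace and inverse inequalities usable with mesh-independent constants -- so that every remainder lands on $S$ itself rather than on an $h^{-1}$-magnified quantity; in particular one should avoid routing the estimate of $\nabla v_h-\bmr{q}_h$ through $\nabla\times\bmr{q}_h$, since the inverse inequality would then introduce a factor $h^{-1}$ that cannot be absorbed.
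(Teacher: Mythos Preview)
The paper does not actually prove this lemma: it is quoted verbatim from \cite[Lemma~3.1]{Ye2018discontinuous} and no argument is supplied here, so there is no in-paper proof to compare against. Your write-up is therefore more than what the paper offers.

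On the substance, your argument is sound. The reduction to controlling $A^2=\sum_K\|\nabla v_h\|_{L^2(K)}^2$ is correct, the element-wise integration-by-parts identity you use is the standard DG ``magic formula'', and the trace/inverse estimates (M2--M3 together with M1) do give the $h$-weights you need so that every facet remainder pairs an $S^{1/2}$ with one of $A$, $\|\bmr q_h\|_{L^2(\Omega)}$, $\|v_h\|_{L^2(\Omega)}$. Closing the loop via the broken Poincar\'e--Friedrichs inequality and a Young absorption is the right mechanism; just be aware that this discrete Poincar\'e inequality for piecewise $H^1$ functions is not stated in the present paper (the inequality $\|v\|_{L^2(\Omega)}\le C\,\enorm{v}_u$ cited later from \cite{arnold1982interior} is for $v\in H^1(\Omega)$), so in a self-contained version you should cite a source for the broken variant (e.g.\ Brenner, or the unified DG analysis in \cite{arnold2002unified}). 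One cosmetic point: in the paper's conventions $\jump{\bmr q_h}$ in $\|\cdot\|_{\bmr p}$ is the scalar normal jump, and your observation that it is dominated by $\|\jump{\bmr q_h\otimes\un}\|_{L^2(e)}$ is exactly the right link between the norm and the bilinear form.
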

The uniqueness of the solution to \eqref{eq:dgweakform} instantly
follows from Lemma \ref{le:dgcoercivity} and the trivial boundedness
of $a_h(\cdot; \cdot)$. Further, it is direct to derive the error
estimate with respect to the norms $\| \cdot\|_{u}$ and
$\| \cdot \|_{\bmr{p}}$ by the approximation properties of spaces
$V_h^m$ and $\bmr{W}_h^m$ \cite[Theorem 4.1]{Ye2018discontinuous}.
\begin{theorem} 
  Let $u_h \times \bmr{q}_h \in V_h^m \times \bmr{W}_h^m$ be the
  solution to \eqref{eq:dgcoercivity}, and assume the exact solution
  $u \in H^{m+1}(\Omega)$ and $\bmr{q} \in \bmr{H}^{m+1}(\Omega)$,
  then there exists a constant $C$ such that
  \begin{equation}
    \|u - u_h\|_{u} + \|\bmr{q} - \bmr{q}_h\|_{\bmr{p}} \leq Ch^{m} \left(
    \|u\|_{H^{m+1}(\Omega)} + \|\bmr{q} \|_{\bmr{H}^{m+1}(\Omega)}
    \right).
    \label{eq:dgestimate}
  \end{equation}
  \label{th:dgestimate}
\end{theorem}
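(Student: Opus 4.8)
The plan is to run the classical least-squares error analysis, which for this coercive formulation splits into a structural part and an approximation part. For the structural part, set $\bmr q = \nabla u$, so that by \eqref{eq:elliptic}--\eqref{eq:ellipticmixed} we have $f = -\nabla\cdot\bmr q$ in $\Omega$ and $g = u$ on $\partial\Omega$. Substituting these relations into \eqref{eq:infdgep} and using that the exact solution $u \in H^{m+1}(\Omega)$ is single-valued across interior faces ($\jump{u}=0$) while $\bmr q = \nabla u$ has single-valued traces ($\jump{\bmr q\otimes\un}=0$ and the normal jump vanishes), one obtains the identity
\[
J_h(v_h,\bmr q_h) = J_h^0(v_h - u,\ \bmr q_h - \bmr q),\qquad \forall\,(v_h,\bmr q_h)\in V_h^m\times\bmr W_h^m,
\]
where $J_h^0$ is the functional obtained from \eqref{eq:infdgep} by setting $f\equiv 0$, $g\equiv 0$; in particular $J_h(u,\bmr q)=0$. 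Inspecting the definitions term by term also shows $a_h(v_h,\bmr q_h;v_h,\bmr q_h) = J_h^0(v_h,\bmr q_h)$, so \eqref{eq:dgweakform} is the Euler--Lagrange equation of \eqref{eq:infdg} and Lemma \ref{le:dgcoercivity} reads precisely $J_h^0(v_h,\bmr q_h)\ge C(\|v_h\|_u^2+\|\bmr q_h\|_{\bmr p}^2)$.

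Next I would derive a best-approximation bound. Fix any $(w_h,\bmr r_h)\in V_h^m\times\bmr W_h^m$. Since $\sqrt{J_h^0}$ is a seminorm (it is an $L^2$-type norm of a linear map applied to its argument), the two facts above give
\[
C\big(\|u_h-w_h\|_u^2+\|\bmr q_h-\bmr r_h\|_{\bmr p}^2\big)\le J_h^0(u_h-w_h,\bmr q_h-\bmr r_h)\le\big(J_h(u_h,\bmr q_h)^{1/2}+J_h(w_h,\bmr r_h)^{1/2}\big)^2 ,
\]
and, because $(u_h,\bmr q_h)$ minimizes $J_h$ over $V_h^m\times\bmr W_h^m$, $J_h(u_h,\bmr q_h)\le J_h(w_h,\bmr r_h)$. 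A triangle inequality in $\|\cdot\|_u$ and $\|\cdot\|_{\bmr p}$ then yields
\[
\|u-u_h\|_u+\|\bmr q-\bmr q_h\|_{\bmr p}\le C\inf_{(w_h,\bmr r_h)\in V_h^m\times\bmr W_h^m}\Big(\|u-w_h\|_u+\|\bmr q-\bmr r_h\|_{\bmr p}+J_h(w_h,\bmr r_h)^{1/2}\Big).
\]

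It remains to bound the infimum. Taking $w_h$ and $\bmr r_h$ to be, for instance, the element-wise $L^2$-projections (or local averaged Taylor polynomials) of $u$ and $\bmr q$ onto $\mb P_m(K)$ and $[\mb P_m(K)]^d$, the volume terms in $\|u-w_h\|_u$, $\|\bmr q-\bmr r_h\|_{\bmr p}$ and $J_h(w_h,\bmr r_h)$ are $O(h^m)$ by the Bramble--Hilbert lemma (using the assumed regularity $u\in H^{m+1}(\Omega)$, $\bmr q\in\bmr H^{m+1}(\Omega)$). For the face terms I would use that $u$ and $\bmr q$ are single-valued on $\MEh^i$ and $u=g$ on $\MEh^b$, so every jump occurring in $J_h(w_h,\bmr r_h)$ and in the face parts of $\|\cdot\|_u,\|\cdot\|_{\bmr p}$ equals a one-sided trace of the approximation error $u-w_h$ or $\bmr q-\bmr r_h$; applying the trace inequality \eqref{eq:traceinequality} to these errors and using that their $L^2$-norm is of order $h^{m+1}$, one order higher than the gradient error, shows each term $h^{-1}\|\cdot\|_{L^2(e)}^2$ is $O(h^{2m})$ as well. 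Summing over $\MTh$ and $\MEh$ gives the right-hand side of \eqref{eq:dgestimate}.

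The structural part (consistency, the identity $a_h=J_h^0$ on the diagonal, and the use of the minimizing property) is essentially automatic for this coercive formulation. The step that needs genuine care is the face bookkeeping in the last paragraph: the penalty weight $h^{-1}$ on the jump terms is absorbed only because \eqref{eq:traceinequality} is invoked on the approximation error rather than on $u$ itself, and because its $L^2$-part carries the extra power $h^{m+1}$; one also has to check, via the inverse inequality \eqref{eq:inverse}, that the chosen projections have $H^1$-approximation order $h^m$. Beyond this, the argument uses only Lemma \ref{le:dgcoercivity} and standard polynomial approximation, and the complete proof is carried out in \cite[Theorem 4.1]{Ye2018discontinuous}.
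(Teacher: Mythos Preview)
Your proposal is correct and follows the standard least-squares error analysis: consistency ($J_h(u,\bmr q)=0$), the minimizing property of $(u_h,\bmr q_h)$, coercivity from Lemma~\ref{le:dgcoercivity}, and then local polynomial approximation combined with the trace inequality \eqref{eq:traceinequality} to handle the $h^{-1}$-weighted face terms. The paper itself does not supply a proof of this theorem; it simply cites \cite[Theorem 4.1]{Ye2018discontinuous}, which is exactly the reference you invoke at the end, so your sketch is entirely in line with the paper's intended argument.
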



\section{Approximation Space with Irrotational Basis}
\label{sec:irrotationbasis}
In this section, we follow the idea in \cite{li2016discontinuous,
  li2017discontinuous} to define an approximation space using a patch
reconstruction operator. Purposely, the reconstruction operator we
propose here will use the irrotational basis, thus the approximation space
obtained is piecewise rotation free. With this new approximation
space, we will decouple the minimization problem \eqref{eq:infdgep}
into two sub-problems, that we can numerically solve $\bmr{p}$ at
first and then solve $u$. Let us introduce an irrotational space
$\bmr{S}_m$ which plays a key role in the construction of the
operator,
\begin{displaymath}
  \bmr{S}_m(D) = \left\{ \bmr{v} \in [\mathbb P_m(D)]^d\ |\ 
  \nabla \times \bmr{v} = 0\right\}.
\end{displaymath}
For the irrotational space, we have that:
\begin{lemma}
  For $\forall \bmr{q} \in \bmr{H}^{m+1}(K) \cap H(\curl^0, K)$,
  there exists a constant $C$ such that there is a polynomial
  $\widetilde{\bmr{q}}_h \in \bmr{S}_m(K)$ such that 
  \begin{equation}
    \| \bmr{q} - \widetilde{\bmr{q}}_h\|_{L^2(K)} + h_K \| \nabla
    \left( \bmr{q} - \widetilde{\bmr{q}}_h \right) \|_{L^2(K)} \leq C
    h_K^{m+1} \|\bmr{q}\|_{ \bmr{H}^{m+1}(K)}.  
    \label{eq:appirr}
  \end{equation}
  \label{le:appirr}
\end{lemma}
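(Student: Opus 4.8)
The statement asserts that the irrotational polynomial space $\bmr{S}_m(K)$ retains the full approximation order $m+1$ for curl-free Sobolev functions, exactly as the unrestricted vector polynomial space $[\mathbb{P}_m(K)]^d$ does for general Sobolev functions. The natural route is to exploit the fact that a curl-free field on a (simply connected, star-shaped) domain is a gradient: write $\bmr{q} = \nabla w$ for some scalar potential $w \in H^{m+2}(K)$, approximate $w$ by a scalar polynomial $\widetilde{w}_h \in \mathbb{P}_{m+1}(K)$ using a standard scalar approximation result (averaged Taylor polynomial / Bramble–Hilbert), and then set $\widetilde{\bmr{q}}_h = \nabla \widetilde{w}_h$. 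Since the gradient of a degree-$(m+1)$ polynomial is a degree-$m$ polynomial and is automatically curl-free, we have $\widetilde{\bmr{q}}_h \in \bmr{S}_m(K)$, and the error bound for $\bmr{q} - \widetilde{\bmr{q}}_h = \nabla(w - \widetilde{w}_h)$ follows immediately from the $H^1$- and $L^2$-estimates on $w - \widetilde{w}_h$.

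More concretely, I would proceed in the following steps. First, on the element $K$ (or, if $K$ is a general polygon, on one of the shape-regular triangles $\widetilde{K}$ in its sub-decomposition, using the shape-regularity hypothesis and then transferring back via the regularity consequences M1--M3), invoke the fact that $K$ is star-shaped with respect to a ball of radius comparable to $h_K$; this is where shape-regularity enters. Second, since $\nabla \times \bmr{q} = 0$ on the star-shaped domain $K$, construct a potential $w$ with $\nabla w = \bmr{q}$; by elliptic regularity (or simply by integrating), $w \in H^{m+2}(K)$ with $|w|_{H^{m+2}(K)} \le \|\bmr{q}\|_{\bmr{H}^{m+1}(K)}$ (seminorms suffice since $w$ is only determined up to a constant). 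Third, let $\widetilde{w}_h \in \mathbb{P}_{m+1}(K)$ be the degree-$(m+1)$ averaged Taylor polynomial of $w$ over the inscribed ball; the Bramble–Hilbert lemma on a star-shaped domain gives
\begin{equation*}
  \|w - \widetilde{w}_h\|_{L^2(K)} + h_K |w - \widetilde{w}_h|_{H^1(K)} + h_K^2 |w - \widetilde{w}_h|_{H^2(K)} \le C h_K^{m+2} |w|_{H^{m+2}(K)}.
\end{equation*}
Fourth, set $\widetilde{\bmr{q}}_h = \nabla \widetilde{w}_h \in [\mathbb{P}_m(K)]^d$; it is curl-free, hence in $\bmr{S}_m(K)$. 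Then
\begin{equation*}
  \|\bmr{q} - \widetilde{\bmr{q}}_h\|_{L^2(K)} = |w - \widetilde{w}_h|_{H^1(K)}, \qquad \|\nabla(\bmr{q} - \widetilde{\bmr{q}}_h)\|_{L^2(K)} = |w - \widetilde{w}_h|_{H^2(K)},
\end{equation*}
and combining with the Bramble–Hilbert estimate and $|w|_{H^{m+2}(K)} \le \|\bmr{q}\|_{\bmr{H}^{m+1}(K)}$ yields exactly \eqref{eq:appirr}.

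**Main obstacle.** The routine part is the scalar approximation estimate, which is textbook. The delicate point is the construction of the potential $w$ with a norm bound that is \emph{uniform in $h_K$ with the correct scaling}: one must check that the Poincaré-type constant relating $|w|_{H^{m+2}(K)}$ to $\|\bmr{q}\|_{\bmr{H}^{m+1}(K)}$ does not degenerate as the element shrinks or as its shape varies within the shape-regularity class. This is handled by a standard scaling argument — map $K$ to a reference configuration of unit diameter, establish the bound there (where it is just the statement that $\mathrm{curl}^{-1}$ is bounded on the orthogonal complement of constants), and scale back, tracking powers of $h_K$; shape-regularity ensures the reference-domain constant is uniform. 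A secondary technical wrinkle is that a general polygonal $K$ need not itself be star-shaped, so one either works on the sub-triangulation $\widetilde{\mc{T}}_{h|K}$ and patches the local estimates (this is exactly the mechanism behind M1--M3 and is used repeatedly in \cite{antonietti:2013}), or one notes that curl-free on $K$ implies a global potential on each connected component and proceeds componentwise. Neither changes the final rate.
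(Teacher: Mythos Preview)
Your approach is essentially identical to the paper's: both lift $\bmr{q}$ to a scalar potential $w$ with $\nabla w = \bmr{q}$, approximate $w$ by a polynomial, and take the gradient to obtain $\widetilde{\bmr{q}}_h \in \bmr{S}_m(K)$. Your version is in fact slightly more careful than the paper's very terse argument --- you use the averaged Taylor polynomial (which avoids the continuity requirement of nodal interpolation) and correctly take the potential approximant in $\mathbb{P}_{m+1}(K)$ so that its gradient lands in $\bmr{S}_m(K)$, whereas the paper writes $\mathbb{P}_{m+2}(K)$, which appears to be a typo.
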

\begin{proof}
  Since $H(\curl^0, K) = \nabla H^1(K)$ \cite{girault1986finite},
  there exists a $v \in H^{m+2}(K)$ such that $\bmr{q} = \nabla
  v$. Let $\widetilde{v} \in \mb P_{m+2}(K)$ be the standard nodal
  interpolation polynomial of $v$, and let
  $\widetilde{\bmr{q}}_h = \nabla \widetilde{v}_h$.  The inequality
  \eqref{eq:appirr} directly follows from the approximation properties
  of $\widetilde{v}_h$.
\end{proof}

With the partition $\MTh$, we define a reconstruction operator from
$\bmr{C}^0(\Omega)$ to the piecewise irrotational polynomial
space. For any element $K \in \MTh$, we prescribe a point
$\bm x_K \in K$, referred as the {\it sampling node} later on, which
is preferred to be the barycenter of $K$. Then, for each element $K$
we construct an element patch $S(K)$ which is an agglomeration of
elements that contain $K$ itself and some elements around $K$. There
are a variety of approaches to build the element patch and in this
paper we agglomerate elements to form the element patch
recursively. For element $K$, we first let
$S_0(K) = \left\{ K \right\}$ and we define $S_t(K)$ as
\begin{displaymath}
  S_t(K) = S_{t-1}(K) \cup \left\{ K'\ |\ \exists \widetilde{K} \in
  S_{t-1}(K)\ \text{s.t.}\ K' \cap \widetilde{K} = e \in \MEh\right\},
  \quad t = 1, 2, \cdots
\end{displaymath}
In the implementation of our code, at the depth $t$ we enlarge $S_t(K)$
element by element and once $S_t(K)$ has collected sufficiently large
number of elements we stop the recursive procedure and let $S(K) =
S_t(K)$, otherwise we let $t = t + 1$ and continue the recursion. The
cardinality of $S(K)$ is denoted by $\# S(K)$.

Further, for element $K$ we denote by $\mc I_K$ the set of sampling
nodes located inside the element patch $S(K)$,
\begin{displaymath}
  \mc I_K \triangleq \left\{ \bm x_{\widetilde{K}}\ |\ \forall
  \widetilde{K} \in S(K) \right\}.
\end{displaymath}
For any function $\bmr{f} \in \bmr{C}^0(\Omega) \cap H(\curl^0;
\Omega)$ and an element $K \in \MTh$, we seek a polynomial $\mc{R}^m_K
\bmr{f}$ of degree $m$ defined on $S(K)$ by solving the following
least squares problem:
\begin{equation}
  \mc{R}^m \bmr{f} = \mathop{\arg \min}_{\bmr{v} \in \bmr{S}_m(S(K))}
  \sum_{\bm x_{\widetilde{K}} \in \mc I_K } |\bmr{v}(\bm
  x_{\widetilde{K}}) - \bmr{f}(\bm x_{\widetilde{K}})|^2.
  \label{eq:lsproblem}
\end{equation}
We note that the existence of the solution to \eqref{eq:lsproblem} is
obvious but the uniqueness of the solution depends on the position of
the sampling nodes in $\mc I_K$, here we follow
\cite{li2016discontinuous} to state the following assumption:
\begin{assumption}
  For all element $K \in \MTh$ and $\bmr{v} \in \bmr{S}_m(S(K))$, 
  \begin{displaymath}
    \bmr{v}|_{\mc I_K} = \bmr{0} \quad \text{implies} \quad
    \bmr{v}|_{S(K)} \equiv \bmr{0}.
  \end{displaymath}
\end{assumption}
This assumption demands the number $\# S(K)$ shall be greater than
$\text{dim}(\bmr{S}_m) / d$ and excludes the situation that all the
points in $\mc I_K$ lie on an algebraic curve of degree $m$.
Hereafter, we always require the assumption holds. 

Due to the linear dependence of the solution \eqref{eq:lsproblem}, a
global reconstruction operator $\mc{R}^m$ for $\bmr{f}$ can be defined
by restricting the polynomial $\mc{R}^m_K \bmr{f}$ on $K$:
\begin{displaymath}
  (\mc{R}^m \bmr{f})|_K = (\mc{R}^m_K \bmr{f})|_K, \quad \forall K \in \MTh.
\end{displaymath}
It is clear that the operator $\mc{R}^m$ embeds the space
$\bmr{C}^0(\Omega) \cap H(\curl^0; \Omega)$ to a piecewise
irrotational polynomial space of degree $m$, and we denote by
$\bmr{U}_h^m$ the image of the operator $\mc{R}^m$. In Appendix, we
give more details about our reconstructed space and the computer
implementation.

We next focus on the approximation property of the operator $\mc{R}^m$.
For element $K$, we define a constant 
\begin{displaymath}
  \Lambda(m, S(K)) = \max_{v \in \mb P_m(S(K))} \frac{\max_{\bm x \in
  S(K)} |v(\bm x)|}{\max_{\bm x \in \mc I_K} | v(\bm x)|}.
\end{displaymath}

We note that under some mild and practical conditions about $S(K)$,
the $\Lambda(m, S(K))$ has a uniform upper bound $\Lambda_m$, which
plays an important role in the approximation property analysis.  We
refer to \cite{li2012efficient, li2016discontinuous} for the
conditions and more details about the constant $\Lambda(m, S(K))$ and
the uniform upper bound. Besides, under such conditions the Lemma
\ref{le:appirr} could be generalized as
\begin{lemma}
  For any function $\bmr{q} \in \bmr{H}^{m+1}(S(K)) \cap H(\curl^0,
  S(K))$, there exists a constant $C$ such that there is a polynomial
  $\widetilde{\bmr{q}}_h \in \bmr{S}_m(S(K))$ such that 
  \begin{equation}
    \| \bmr{q} - \widetilde{\bmr{q}}_h\|_{L^2(S(K))} + h_K \| \nabla
    \left( \bmr{q} - \widetilde{\bmr{q}}_h \right) \|_{L^2(S(K))} \leq
    C h_K^{m+1} \|\bmr{q}\|_{ \bmr{H}^{m+1}(S(K))}.
    \label{eq:appirrpatch}
  \end{equation}
  \label{le:appirrpatch}
\end{lemma}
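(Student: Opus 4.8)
The plan is to run the argument of Lemma~\ref{le:appirr} with the single element $K$ replaced by the patch $S(K)$; only two geometric facts about $S(K)$ have to be supplied, after which the estimate is a routine scaling argument. Throughout I use that, under the mild and practical conditions on the patches recalled above and detailed in \cite{li2012efficient, li2016discontinuous}, $S(K)$ is a simply connected domain that is a union of a uniformly bounded number of uniformly shape-regular elements, with $\mathrm{diam}(S(K)) \simeq h_K$; equivalently, after affine rescaling to unit diameter the domains $\{S(K)\}$ form a compact family of Lipschitz domains obeying a fixed cone condition, so the polynomial-approximation constants below can be taken independent of $h$ and $K$.

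\emph{Step 1 (scalar potential).} Since $\nabla\times\bmr{q}=0$ and $S(K)$ is simply connected, $H(\curl^0,S(K)) = \nabla H^1(S(K))$, so there is a $v \in H^{m+2}(S(K))$, unique up to an additive constant, with $\bmr{q} = \nabla v$ and $|v|_{H^{m+2}(S(K))} \le C\|\bmr{q}\|_{\bmr{H}^{m+1}(S(K))}$. \emph{Step 2 (polynomial approximation of $v$).} By the Bramble--Hilbert lemma applied on $S(K)$ — the cone condition being uniform by assumption, so that the constant is $h$- and $K$-independent — there is a polynomial $\widetilde{v}_h \in \mb P_{m+1}(S(K))$ with
\begin{equation*}
  \|v-\widetilde{v}_h\|_{L^2(S(K))} + h_K\,|v-\widetilde{v}_h|_{H^1(S(K))} + h_K^2\,|v-\widetilde{v}_h|_{H^2(S(K))} \le C\,h_K^{m+2}\,|v|_{H^{m+2}(S(K))},
\end{equation*}
the powers of $h_K$ coming from $\mathrm{diam}(S(K))\simeq h_K$. (Alternatively one may extend $v$ to $\mathbb{R}^d$ and approximate on a ball $\supset S(K)$ of radius $\simeq h_K$, trading the cone condition for a uniform Sobolev-extension bound.) Set $\widetilde{\bmr{q}}_h := \nabla\widetilde{v}_h$; since $\widetilde{v}_h$ has degree at most $m+1$ we have $\widetilde{\bmr{q}}_h \in [\mb P_m(S(K))]^d$, and $\nabla\times\widetilde{\bmr{q}}_h = \nabla\times\nabla\widetilde{v}_h = 0$, hence $\widetilde{\bmr{q}}_h \in \bmr{S}_m(S(K))$.

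\emph{Step 3 (conclusion).} Using $\bmr{q}-\widetilde{\bmr{q}}_h = \nabla(v-\widetilde{v}_h)$ and $\nabla(\bmr{q}-\widetilde{\bmr{q}}_h) = \nabla^2(v-\widetilde{v}_h)$, the last two terms of the estimate in Step~2 give
\begin{equation*}
  \|\bmr{q}-\widetilde{\bmr{q}}_h\|_{L^2(S(K))} + h_K\,\|\nabla(\bmr{q}-\widetilde{\bmr{q}}_h)\|_{L^2(S(K))} \le C\,h_K^{m+1}\,|v|_{H^{m+2}(S(K))} \le C\,h_K^{m+1}\,\|\bmr{q}\|_{\bmr{H}^{m+1}(S(K))},
\end{equation*}
which is \eqref{eq:appirrpatch}. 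The step I expect to be the genuine obstacle is not this chain of inequalities but the \emph{uniformity} of the constants: one must convert the informally stated conditions on the element patches into the precise statement used in Step~2, namely that after rescaling to unit diameter the family $\{S(K)\}$ is compact in a suitable class of Lipschitz domains (a fixed cone condition suffices), so that the Bramble--Hilbert constant is $h$- and $K$-independent. In the same vein one has to ensure each $S(K)$ is simply connected, which is what makes the potential $v$ in Step~1 available and, in effect, what makes the purely curl-free polynomial space $\bmr{S}_m(S(K))$ rich enough to approximate $\bmr{q}$ on the whole patch. Granting these geometric facts, everything else is the scaling computation above and is straightforward.
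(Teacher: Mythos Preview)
Your proposal is correct and is essentially the approach the paper has in mind: the paper's proof of Lemma~\ref{le:appirrpatch} is just a one-line citation to \cite[Assumption A and Property M3]{li2016discontinuous}, which supplies precisely the uniform geometric facts about $S(K)$ (connectedness, $\mathrm{diam}(S(K))\simeq h_K$, star-shapedness with respect to a ball of comparable radius) that you identify as the crux, after which the argument is the patch version of the paper's own proof of Lemma~\ref{le:appirr}. Your write-up in fact spells out what the paper leaves implicit.
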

\begin{proof}
  It directly follows from \cite[Assumption A and Property M3]{li2016discontinuous}.
\end{proof}

With $\Lambda_m$, let us state the approximation property of the
operator $\mc{R}^m_K$.
\begin{theorem}
  Let $\bmr{f} \in \bmr{H}^{m+1}(\Omega) \cap H(\curl^0; \Omega)$ and
  $K \in \MTh$, there exists a constant $C$ such that 
  \begin{equation}
    \begin{aligned}
      \|\bmr{f} - \mc{R}^m_K \bmr{f} \|_{\bmr{H}^q(K)} &\leq C
      \Lambda_m h_K^{m+1-q} \|\bmr{f}\|_{\bmr{H}^{m+1}(S(K))}, \quad q
      = 0, 1, \\
      \| \nabla^q(\bmr{f} - \mc{R}^m_K \bmr{f}) \|_{L^2(\partial K)} &
      \leq C \Lambda_m h_K^{m+1 - q -1/2} \| \bmr{f} \|_{\bmr{H}^{m+1}
      (S(K))}, \quad q = 0, 1.\\
    \end{aligned}
    \label{eq:approximation}
  \end{equation}
  \label{th:approximation}
\end{theorem}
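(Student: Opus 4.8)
The plan is to prove \eqref{eq:approximation} by a standard approximation-plus-stability argument: insert the auxiliary polynomial $\widetilde{\bmr q}_h \in \bmr S_m(S(K))$ provided by Lemma \ref{le:appirrpatch}, use the fact that $\mc R_K^m$ reproduces polynomials in $\bmr S_m(S(K))$ (since any such polynomial is its own least squares fit on $\mc I_K$), and then estimate the difference $\mc R_K^m \bmr f - \widetilde{\bmr q}_h = \mc R_K^m(\bmr f - \widetilde{\bmr q}_h)$ using the stability of the reconstruction operator in terms of $\Lambda_m$. Concretely, write
\[
  \bmr f - \mc R_K^m \bmr f = (\bmr f - \widetilde{\bmr q}_h) - \mc R_K^m(\bmr f - \widetilde{\bmr q}_h),
\]
so it suffices to bound each term on the right in the $\bmr H^q(K)$ norm and in the $L^2(\partial K)$ norm of $\nabla^q$.

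The first term $\bmr f - \widetilde{\bmr q}_h$ is handled directly: Lemma \ref{le:appirrpatch} gives the $L^2(S(K))$ and $h_K$-weighted $H^1(S(K))$ bounds by $C h_K^{m+1}\|\bmr f\|_{\bmr H^{m+1}(S(K))}$, and the boundary contribution follows from the trace inequality M2 applied on $K$ together with these volume bounds. The second term is the crux. Since $\mc R_K^m(\bmr f - \widetilde{\bmr q}_h) \in \bmr S_m(S(K))$, I would first control its pointwise values: by definition \eqref{eq:lsproblem} the least squares residual of $\mc R_K^m(\bmr f - \widetilde{\bmr q}_h)$ against the data $(\bmr f - \widetilde{\bmr q}_h)|_{\mc I_K}$ is no larger than that of the zero polynomial, hence $\max_{\bm x \in \mc I_K}|\mc R_K^m(\bmr f - \widetilde{\bmr q}_h)(\bm x)| \le C (\#S(K))^{1/2}\max_{\bm x \in \mc I_K}|(\bmr f - \widetilde{\bmr q}_h)(\bm x)|$. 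Then the definition of $\Lambda(m,S(K)) \le \Lambda_m$ upgrades this to a bound on $\max_{\bm x \in S(K)}|\mc R_K^m(\bmr f - \widetilde{\bmr q}_h)|$, and a standard norm-equivalence on the finite-dimensional polynomial space $\bmr S_m(S(K))$ (using shape-regularity M1 to pass between $\max$-norm and $L^2$/Sobolev norms, with the correct $h_K$ scaling, and the inverse inequality \eqref{eq:inverse} for the $\nabla^q$ and trace terms) converts this into the desired $\bmr H^q(K)$ and $L^2(\partial K)$ estimates with the factor $h_K^{m+1-q}$ (resp. $h_K^{m+1-q-1/2}$). Finally, $\max_{\bm x \in \mc I_K}|(\bmr f - \widetilde{\bmr q}_h)(\bm x)|$ is bounded using a pointwise/Sobolev embedding estimate for $\bmr f - \widetilde{\bmr q}_h$ on the patch combined once more with \eqref{eq:appirrpatch}, again yielding $h_K^{m+1-d/2}\|\bmr f\|_{\bmr H^{m+1}(S(K))}$ up to the right rescaling.

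The main obstacle is bookkeeping of the $h_K$-powers through the chain of norm equivalences on $\bmr S_m(S(K))$: the patch $S(K)$ consists of $O(1)$ shape-regular pieces of comparable size $h_K$ (by M1 and the bounded cardinality $\#S(K)$), so all the scaling constants are uniform, but one must be careful that the pointwise-to-$L^2$ conversion on a patch of measure $\sim h_K^d$ introduces exactly the factor $h_K^{d/2}$ that cancels the $h_K^{-d/2}$ from the Sobolev embedding estimate of the first term, leaving the clean powers in \eqref{eq:approximation}. The dependence on $\Lambda_m$ enters linearly and only through the single application of the definition of $\Lambda(m,S(K))$, so tracking it is straightforward once the scaling is set up correctly.
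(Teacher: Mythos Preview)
Your proposal is correct and follows essentially the same approach as the paper: the paper's proof simply defers to \cite[Lemma 2.4]{li2016discontinuous} together with Lemma~\ref{le:appirrpatch}, and what you have written is precisely the argument of that referenced lemma (polynomial reproduction plus $\Lambda_m$-stability of the discrete least squares reconstruction, combined with the patch approximation estimate) spelled out in the present irrotational setting.
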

\begin{proof}
The estimates directly follows the proof of \cite[Lemma
2.4]{li2016discontinuous} and the Lemma \ref{le:appirrpatch}.
\end{proof}



\section{Sequential Least Squares Finite Element Approximation}
\label{sec:lsfem}
Let us define a new functional $\wt{J}_h(\cdot)$ by
\begin{equation}
  \begin{aligned}
    \wt{J}_h(\bmr{q}_h) \triangleq \sum_{K \in \MTh} \| \nabla \cdot
    \bmr{q}_h + f\|_{L^2(K)}^2 + &\sum_{e \in \MEh^i}
    \frac{1}{h} \| \jump{\bmr{q}_h \otimes \un} \|_{L^2(e)}^2 \\
    + &\sum_{e \in \MEh^b} \frac{1}{h} \|\bmr{q}_h \times \un -
    \nabla g \times \un\|_{L^2(e)}^2. \\
  \end{aligned}
  \label{eq:funcp}
\end{equation}
The terms in $\wt{J}_h(\bmr{q}_h)$ include the part related to the
flux in \eqref{eq:infdgep} and the term on boundary. We minimize this
functional in $\bmr{U}_h^m$ to have an approximate flux. The
corresponding minimization problem reads: {\it find
  $\bmr{p}_h \in \bmr{U}_h^m$ such that
\begin{equation}
  \wt{J}_h(\bmr{p}_h) = \inf_{\bmr{q}_h \in \bmr{U}_h^m} \wt{J}_h(
  \bmr{q}_h).
  \label{eq:discretepinf}
\end{equation}}
The Euler-Lagrange equation of this minimization problem is as: {\it
find $\bmr{p}_h \in \bmr{U}_h^m$ such that
\begin{equation}
  \wt{a}_h(\bmr{p}_h, \bmr{q}_h) = \wt{l}_h(\bmr{q}_h), \quad
  \bmr{q}_h \in \bmr{U}_h^m, 
  \label{eq:variational}
\end{equation}
where the bilinear form $\wt{a}_h(\cdot, \cdot)$ is
\begin{displaymath}
  \begin{aligned}
    \wt{a}_h(\bmr{p}_h, \bmr{q}_h) = \sum_{K \in \MTh} \int_K \nabla
    \cdot \bmr{p}_h \nabla \cdot \bmr{q}_h \d{x} &+ \sum_{e \in
    \MEh^i} \int_e \frac{1}{h} \jump{\bmr{p}_h \otimes \un} \jump{\bmr{q}_h
    \otimes \un} \d{s} \\
    &+ \sum_{e \in \MEh^b} \int_e \frac{1}{h} (\bmr{p}_h \times \un) \cdot
    (\bmr{q}_h \times \un) \d{s}, \\
  \end{aligned}
\end{displaymath}
and the linear form $\wt{l}_h(\cdot)$ is 
\begin{displaymath}
  \wt{l}_h(\bmr{q}_h) = \sum_{K \in \MTh} \int_K f \nabla \cdot \bmr{q}_h
  \d{x} + \sum_{e \in \MEh^b} \int_e \frac{1}{h} (\bmr{p}_h \times \un)
  \cdot (\nabla g \times \un) \d{s}.
\end{displaymath}}
Let
\begin{equation}
  \begin{aligned}
    \enorm{\bmr{q}_h}_{\bmr{p}}^2 \triangleq \sum_{K \in \MTh} \|
    \nabla \cdot \bmr{q}_h\|_{L^2(K)}^2 + \sum_{e \in \MEh^i}
    \frac{1}{h} \| \jump{\bmr{q}_h \otimes \un} \|_{L^2(e)}^2 +
    \sum_{e \in \MEh^b} \frac{1}{h} \|\bmr{q}_h \times
    \un\|_{L^2(e)}^2\\
  \end{aligned}
  \label{eq:pnorm}
\end{equation}
for
$\forall \bmr{q}_h \in \bmr{U}_h^m + \bmr{H}^1(\Omega) \cap H(\curl^0;
\Omega)$. The following lemma shows that $\enorm{\cdot}_{\bmr{p}}$
actually defines a norm on the space
$\bmr{U}_h^m + \bmr{H}^1(\Omega) \cap H(\curl^0; \Omega)$, referred as
the {\it energy norm} later on.
\begin{lemma}
  For any $\bmr{q}_h \in \bmr{U}_h^m + \bmr{H}^1(\Omega) \cap
  H(\curl^0; \Omega)$, there exists a constant $C$  such that 
  \begin{equation}
    \|\bmr{q}_h\|_{L^2(\Omega)} \leq C \enorm{\bmr{q}_h}_{\bmr{p}}.
    \label{eq:pisnorm}
  \end{equation}
  \label{le:pisnorm}
\end{lemma}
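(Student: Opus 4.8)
\emph{Proof proposal.} The plan is to establish this Poincar\'e--Friedrichs type inequality by an $L^2$-orthogonal Helmholtz decomposition of $\bmr{q}_h$, controlling the irrotational part by the divergence term together with the normal parts of the interior jumps, and the solenoidal part by the tangential parts of the interior jumps together with the boundary term. First I would note that every element of $\bmr{U}_h^m + \bmr{H}^1(\Omega)\cap H(\curl^0;\Omega)$ is rotation free on each $K$ (the reconstructed summand lies piecewise in $\bmr{S}_m$ and the other summand is globally curl free) and that its restriction to $K$ belongs to $\bmr{H}^1(K)$, hence has an $L^2$ divergence, an $L^2$ curl and $L^2$ traces on $\partial K$. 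Assuming $\Omega$ simply connected, I would write $\bmr{q}_h = \nabla\phi + \bm{z}$ with $\phi\in H_0^1(\Omega)$ determined by $\int_\Omega \nabla\phi\cdot\nabla v\,\d{x} = \int_\Omega \bmr{q}_h\cdot\nabla v\,\d{x}$ for all $v\in H_0^1(\Omega)$, and with $\bm{z}\in\bmr{L}^2(\Omega)$ satisfying $\int_\Omega \bm{z}\cdot\nabla v\,\d{x}=0$ for all such $v$ (so $\nabla\cdot\bm{z}=0$ in the distributional sense); then $\|\bmr{q}_h\|_{L^2(\Omega)}^2 = \|\nabla\phi\|_{L^2(\Omega)}^2 + \|\bm{z}\|_{L^2(\Omega)}^2$.

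For the irrotational part, taking $v=\phi$ gives $\|\nabla\phi\|_{L^2(\Omega)}^2 = \sum_{K\in\MTh}\int_K \bmr{q}_h\cdot\nabla\phi\,\d{x}$. Integrating by parts on each $K$ and collecting the face integrals, and using that $\phi$ is single valued and vanishes on $\partial\Omega$, the boundary faces drop out and each interior face $e$ only contributes $\int_e\big((\bmr{q}_h^+-\bmr{q}_h^-)\cdot\un^+\big)\phi\,\d{s}$, where the first factor has $L^2(e)$ norm at most $\|\jump{\bmr{q}_h\otimes\un}\|_{L^2(e)}$. Then Cauchy--Schwarz, the Poincar\'e inequality $\|\phi\|_{L^2(\Omega)}\le C\|\nabla\phi\|_{L^2(\Omega)}$, the trace inequality M2 and the standard scaling (which gives $\sum_{e\in\MEh} h\|\phi\|_{L^2(e)}^2\le C\|\nabla\phi\|_{L^2(\Omega)}^2$) yield $\|\nabla\phi\|_{L^2(\Omega)} \le C\enorm{\bmr{q}_h}_{\bmr{p}}$.

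For the solenoidal part, the orthogonality gives $\|\bm{z}\|_{L^2(\Omega)}^2 = \int_\Omega \bm{z}\cdot\bmr{q}_h\,\d{x}$. Since $\nabla\cdot\bm{z}=0$ and $\Omega$ is simply connected I would introduce a potential $\bm\psi$ with $\bm{z}=\curl\bm\psi$ (a scalar stream function when $d=2$, a vector potential when $d=3$), normalized so that $\|\bm\psi\|_{\bmr{H}^1(\Omega)}\le C\|\bm{z}\|_{\bmr{L}^2(\Omega)}$ (e.g.\ \cite{girault1986finite}). Writing $\int_K\bm{z}\cdot\bmr{q}_h\,\d{x} = \int_K(\curl\bm\psi)\cdot\bmr{q}_h\,\d{x}$ and integrating by parts, the property $\curl\bmr{q}_h|_K=0$ annihilates the volume term and leaves the boundary pairing $\int_{\partial K}\bm\psi\cdot(\bmr{q}_h\times\un)\,\d{s}$; summing over $K$, each interior face contributes only $\int_e \bm\psi\cdot\big((\bmr{q}_h^+-\bmr{q}_h^-)\times\un^+\big)\,\d{s}$, with the cross-product factor of $L^2(e)$ norm at most $\|\jump{\bmr{q}_h\otimes\un}\|_{L^2(e)}$, while each boundary face contributes $\int_e\bm\psi\cdot(\bmr{q}_h\times\un)\,\d{s}$ -- exactly the two face terms appearing in $\enorm{\cdot}_{\bmr{p}}$. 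Repeating the scaling argument (the potential bound gives $\sum_{e\in\MEh} h\|\bm\psi\|_{L^2(e)}^2\le C\|\bm{z}\|_{L^2(\Omega)}^2$, after, in the $d=2$ case, first using that the total face contribution is invariant under $\bm\psi\mapsto\bm\psi+\mathrm{const}$ -- since $\oint_{\partial K}\bmr{q}_h=\int_K\curl\bmr{q}_h=0$ -- to legitimize the Poincar\'e--Wirtinger normalization) yields $\|\bm{z}\|_{L^2(\Omega)}\le C\enorm{\bmr{q}_h}_{\bmr{p}}$. Adding the two estimates gives \eqref{eq:pisnorm}.

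The delicate step, and where the design of $\bmr{U}_h^m$ really enters, is the solenoidal estimate: since $\bm{z}$ is merely $\bmr{L}^2$, the element-wise integration by parts is meaningful only after passing to $\bm\psi$, and one must check that transferring all derivatives onto $\bmr{q}_h$ annihilates the volume term (this is exactly the rotation-free property) and that the surviving boundary pairings are dominated by the two face terms of the energy norm -- in particular that $|\jump{\bmr{q}_h\otimes\un}|$ simultaneously controls the normal jump of the first step and the tangential jump of the second, and that no normal boundary term is needed thanks to $\phi\in H_0^1(\Omega)$. Apart from routine scaling through M1--M2 and the two Poincar\'e inequalities, the only quantitative ingredient is the $h$-independent potential estimate $\|\bm\psi\|_{\bmr{H}^1(\Omega)}\le C\|\bm{z}\|_{\bmr{L}^2(\Omega)}$, whose constant depends only on $\Omega$.
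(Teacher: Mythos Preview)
Your argument is correct and is essentially the paper's own proof: both use the $L^2$-orthogonal Helmholtz decomposition $\bmr{q}_h=\nabla v+\nabla\times\psi$ with $v\in H_0^1(\Omega)$, integrate by parts elementwise, exploit the piecewise curl-free structure of $\bmr{U}_h^m+\bmr{H}^1(\Omega)\cap H(\curl^0;\Omega)$ to kill the volume curl term, and close with the trace inequality and the stability bounds $\|v\|_{H^1}+\|\psi\|_{H^1}\le C\|\bmr{q}_h\|_{L^2}$. The only cosmetic difference is the order of construction---the paper solves for the stream function first and then recovers $v$, whereas you solve for the gradient potential first and then introduce the stream/vector potential for the remainder---and your normalization remark for $\psi$ in $d=2$ is a nice detail the paper leaves implicit.
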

\begin{proof}
  The idea follows \cite[Lemma 1]{Bensow2005discontinuous} to apply
  the orthogonal decomposition of $\bmr{L}^2(\Omega)$. We only proof
  for the case $d = 2$ and it is almost trivial to extend the result
  for three dimensional case. Since $\bmr{q}_h \in \bmr{L}^2(\Omega)$,
  we let $\phi \in H^1(\Omega) \backslash \mb R$ be the only solution
  of
  \begin{displaymath}
    (\nabla \times \phi, \nabla \times \chi) = (\bmr{q}_h, \nabla
    \times \chi), \quad \forall \chi \in H^1(\Omega).
  \end{displaymath}
  This solution $\phi$ satisfies 
  \begin{displaymath}
    -\Delta \phi = \nabla \times \bmr{q_h}, \quad \text{in}\
    H^{-1}(\Omega).
  \end{displaymath}
  Applying the Green's formula, we have
  \begin{displaymath}
    0 = (\bmr{q}_h - \nabla \times \phi, \nabla \times \chi) = \left(
    (\bmr{q}_h - \nabla \times\phi) \times \un, \chi
      \right)_{L^2(\partial \Omega)}, \quad \forall \chi \in
      H^1(\Omega). 
  \end{displaymath}
  Thus there exists $v \in H^1_0(\Omega)$ such that $\nabla v =
  \bmr{q}_h - \nabla \times \phi$ \cite{girault1986finite}. Besides we
  have the stability estimates 
  \begin{equation}
    \|\chi\|_{H^1(\Omega)} \leq C \|\bmr{q}_h\|_{\bmr{L}^2(\Omega)} , 
    \quad 
    \|v\|_{H^1(\Omega)} \leq C \|\bmr{q}_h\|_{\bmr{L}^2(\Omega)}. 
    \label{eq:chiqstability}
  \end{equation}
  Further, we use the decomposition to obtain 
  \begin{displaymath}
    \begin{aligned}
      \|\bmr{q}_h\|_{\bmr{L}^2(\Omega)}^2 &= \left( \sum_{K \in \MTh}
      \int_K \bmr{q}_h \cdot \nabla v \d{x} + \int_K \bmr{q}_h \cdot
      \nabla \times \chi \d{x} \right) \\
      &= \sum_{K \in \MTh} \left( \int_{\partial K} v \bmr{q}_h \cdot
      \un \d{s} - \int_K v \nabla \cdot \bmr{q}_h \d{x} +
      \int_{\partial K} \chi \bmr{q}_h \times \un \d{s} \right). \\ 
      &= \sum_{e \in \MEh^i} \int_e \left( v \jump{\bmr{q}_h \cdot
      \un} + \chi \jump{\bmr{q}_h \times \un} \right) \d{s} + \sum_{e
      \in \MEh^b} \int_e \chi \bmr{q}_h \times \un \d{s} - \int_{K} v
      \nabla \cdot \bmr{q}_h \d{x}\\
    \end{aligned}
  \end{displaymath}
  And we have that 
  \begin{displaymath}
    \sum_{e \in \MEh^i} \int_e \left( \|\jump{ \bmr{q}_h \cdot \un}
    \|_{L^2(e)}^2 + \| \jump{ \bmr{q}_h \times \un} \|_{L^2(e)}^2
    \right) \d{s} \leq C \sum_{e \in \MEh^i} \int_e \| \jump{
    \bmr{q}_h \otimes \un}\|_{L^2(e)}^2 \d{s}. 
  \end{displaymath}
  Using the Cauchy-Schwarz inequality, trace inequality
  \eqref{eq:traceinequality} and the stability estimate
  \eqref{eq:chiqstability} could yield the estimate
  \eqref{eq:pisnorm}, which completes the proof.
\end{proof}
Since for
$\forall \bmr{q}_h \in \bmr{U}_h^m + \bmr{H}^1(\Omega) \cap H(\curl^0;
\Omega)$ we have
$\wt{a}_h(\bmr{q}_h, \bmr{q}_h) = \enorm{\bmr{q}_h}_{\bmr{p}}^2$, it
is implied that the problem \eqref{eq:variational} has a unique
solution. Moreover, we could establish the convergence result with
respect to the norm $\enorm{\cdot}_{\bmr{p}}$.
\begin{theorem}
  Let the solution
  $\bmr{p} \in \bmr{H}^{m+1}(\Omega) \cap H(\curl^0; \Omega)$ and let
  $\bmr{p}_h \in \bmr{U}_h^m$ be the solution to
  \eqref{eq:variational}, then we have
  \begin{equation}
    \enorm{\bmr{p} - \bmr{p}_h}_{\bmr{p}} \leq Ch^m
    \|\bmr{p}\|_{\bmr{H}^{m+1}(\Omega)}.
    \label{eq:pconvergence}
  \end{equation}
  \label{th:pconvergence}
\end{theorem}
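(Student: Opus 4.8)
The plan is to run the standard quasi-optimality (C\'ea-type) argument, which is available because, by Lemma~\ref{le:pisnorm}, the symmetric bilinear form $\wt{a}_h(\cdot,\cdot)$ is an inner product on $\bmr{U}_h^m+\bmr{H}^1(\Omega)\cap H(\curl^0;\Omega)$ whose induced norm is $\enorm{\cdot}_{\bmr{p}}$; the only non-routine ingredients will be the consistency of \eqref{eq:variational} with respect to the exact flux and the approximation bound of Theorem~\ref{th:approximation}. For consistency, observe that since $\bmr{p}=\nabla u$ with $u$ the exact solution of \eqref{eq:elliptic}, $\bmr{p}$ is single-valued on every interior face, so $\jump{\bmr{p}\otimes\un}=\bmr{p}\otimes(\un^++\un^-)=\bm{0}$; on $\partial\Omega$ the condition $u=g$ gives the tangential identity $\bmr{p}\times\un=\nabla u\times\un=\nabla g\times\un$; and $\nabla\cdot\bmr{p}=\Delta u=-f$ in each $K\in\MTh$. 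Substituting these identities into the definitions of $\wt{a}_h(\cdot,\cdot)$ and $\wt{l}_h(\cdot)$ shows $\wt{a}_h(\bmr{p},\bmr{q}_h)=\wt{l}_h(\bmr{q}_h)$ for every $\bmr{q}_h\in\bmr{U}_h^m$, and subtracting \eqref{eq:variational} yields $\wt{a}_h(\bmr{p}-\bmr{p}_h,\bmr{q}_h)=0$ for all $\bmr{q}_h\in\bmr{U}_h^m$.

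Since $\bmr{p}\in\bmr{H}^{m+1}(\Omega)\cap H(\curl^0;\Omega)\subset\bmr{H}^1(\Omega)\cap H(\curl^0;\Omega)$ and $\mc{R}^m\bmr{p}\in\bmr{U}_h^m$, both $\bmr{p}-\bmr{p}_h$ and $\bmr{p}-\mc{R}^m\bmr{p}$ lie in the space on which $\wt{a}_h(\bmr{q},\bmr{q})=\enorm{\bmr{q}}_{\bmr{p}}^2$ and Cauchy--Schwarz holds. Choosing $\bmr{q}_h=\mc{R}^m\bmr{p}-\bmr{p}_h$ in the orthogonality relation gives
\[
\enorm{\bmr{p}-\bmr{p}_h}_{\bmr{p}}^2=\wt{a}_h(\bmr{p}-\bmr{p}_h,\bmr{p}-\mc{R}^m\bmr{p})\le\enorm{\bmr{p}-\bmr{p}_h}_{\bmr{p}}\,\enorm{\bmr{p}-\mc{R}^m\bmr{p}}_{\bmr{p}},
\]
hence $\enorm{\bmr{p}-\bmr{p}_h}_{\bmr{p}}\le\enorm{\bmr{p}-\mc{R}^m\bmr{p}}_{\bmr{p}}$, and it suffices to bound the right-hand side.

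I would estimate $\enorm{\bmr{p}-\mc{R}^m\bmr{p}}_{\bmr{p}}$ term by term from \eqref{eq:pnorm}, applying Theorem~\ref{th:approximation} with $\bmr{f}=\bmr{p}$. For the volume term, $\|\nabla\cdot(\bmr{p}-\mc{R}^m_K\bmr{p})\|_{L^2(K)}\le C\|\nabla(\bmr{p}-\mc{R}^m_K\bmr{p})\|_{L^2(K)}\le C\Lambda_m h_K^{m}\|\bmr{p}\|_{\bmr{H}^{m+1}(S(K))}$ by the first estimate in \eqref{eq:approximation} with $q=1$. For a face $e\subset\partial K$, the second estimate in \eqref{eq:approximation} with $q=0$ gives $\|(\bmr{p}-\mc{R}^m_K\bmr{p})\times\un\|_{L^2(e)}\le\|\bmr{p}-\mc{R}^m_K\bmr{p}\|_{L^2(\partial K)}\le C\Lambda_m h_K^{m+1/2}\|\bmr{p}\|_{\bmr{H}^{m+1}(S(K))}$, and since $\jump{\bmr{p}\otimes\un}=\bm{0}$, on an interior face $e=K^+\cap K^-$ the jump $\jump{(\bmr{p}-\mc{R}^m\bmr{p})\otimes\un}$ is controlled by the same estimate applied on $K^+$ and $K^-$. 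Multiplying the face contributions by $h^{-1}$ converts the factor $h_K^{2m+1}$ into $h_K^{2m}\le h^{2m}$; then squaring, summing over $K\in\MTh$ and $e\in\MEh$, and using the bounded overlap of the patches $\{S(K)\}_{K\in\MTh}$ to absorb $\sum_{K\in\MTh}\|\bmr{p}\|_{\bmr{H}^{m+1}(S(K))}^2\le C\|\bmr{p}\|_{\bmr{H}^{m+1}(\Omega)}^2$ gives $\enorm{\bmr{p}-\mc{R}^m\bmr{p}}_{\bmr{p}}\le C\Lambda_m h^{m}\|\bmr{p}\|_{\bmr{H}^{m+1}(\Omega)}$. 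Since $\Lambda_m$ is a uniform constant, absorbing it into $C$ and combining with the previous paragraph yields \eqref{eq:pconvergence}.

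There is no genuinely hard step: the argument is the classical symmetric-coercive quasi-optimality estimate together with the known approximation property of the reconstructed space. The two places that need care are the consistency in the first step — one should check that the boundary term in $\wt{l}_h$ is exactly the tangential trace $\nabla g\times\un$ produced by the exact flux and that the interior tangential jumps of $\bmr{p}$ vanish — and, in the approximation step, the bookkeeping of the $h^{-1}$ face weights against the powers of $h_K$ and of the patch-overlap constant when passing from the local to the global $\bmr{H}^{m+1}$ norm.
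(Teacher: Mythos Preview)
Your proposal is correct and is essentially the paper's argument. The only cosmetic difference is that the paper reaches the quasi-optimality inequality $\enorm{\bmr{p}-\bmr{p}_h}_{\bmr{p}}\le\enorm{\bmr{p}-\mc{R}^m\bmr{p}}_{\bmr{p}}$ in one line from the minimization characterization, observing that $\wt{J}_h(\bmr{q}_h)=\enorm{\bmr{p}-\bmr{q}_h}_{\bmr{p}}^2$ for every $\bmr{q}_h$ (which encodes exactly the consistency facts you verify) and hence $\enorm{\bmr{p}-\bmr{p}_h}_{\bmr{p}}^2=\wt{J}_h(\bmr{p}_h)\le\wt{J}_h(\mc{R}^m\bmr{p})=\enorm{\bmr{p}-\mc{R}^m\bmr{p}}_{\bmr{p}}^2$, whereas you obtain the same inequality via Galerkin orthogonality and Cauchy--Schwarz; the approximation step is then handled identically.
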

\begin{proof}
  Since $\bmr{p}_h$ minimizes the problem \eqref{eq:discretepinf} and
  $\jump{\bmr{p} \otimes \un} = 0$, we have
  \begin{displaymath}
    \enorm{\bmr{p} - \bmr{p}_h}_{\bmr{p}}^2 = \wt{J}_h(\bmr{p}_h) \leq
    \wt{J}_h(\mc R^m \bmr{p}) = \enorm{\bmr{p} - \mc R^m
    \bmr{p}}_{\bmr{p}}^2.
  \end{displaymath}
  Therefore, we only need to bound
  $\enorm{\bmr{p} - \mc R^m \bmr{p}}_{\bmr{p}}$.

  By the approximation \eqref{eq:approximation} and trace inequality
  \eqref{eq:traceinequality}, we obtain that for element $K$,
  \begin{displaymath}
    \|\nabla\cdot \bmr{p} - \nabla \cdot \mc R_m \bmr{p} \|_{L^2(K)}
    \leq Ch_K^m \|\bmr{p}\|_{\bmr{H}^{m+1}(S(K))}, 
  \end{displaymath}
  and 
  \begin{displaymath}
    \begin{aligned}
      \|(\bmr{p} - \mc R_m \bmr{p}) \otimes \un \|_{L^2(\partial K)}^2
      & \leq C \| \bmr{p} - \mc R_m \bmr{p} \|_{L^2(\partial K)}^2\\
      & \leq C (h_K^{-1} \|\bmr{p} - \mc R_m \bmr{p} \|_{L^2(K)}^2 + h_K
      \|\nabla( \bmr{p} - \mc R_m \bmr{p}) \|_{L^2(K)}^2 ) \\
      & \leq Ch_K^{2m+1} \|\bmr{p}\|_{\bmr{H}^{m+1}(S(K))}. \\
    \end{aligned}
  \end{displaymath}
  The inequality \eqref{eq:pconvergence} is concluded by summing over
  all elements in the partition, which completes the proof.
\end{proof}
After getting the numerical flux $\bmr{p}_h$, the next step is to plug
it into the functional \eqref{eq:infdgep} to calculate the pressure
$u$. We define the functional ${J}^u_h(\cdot)$ as below:
\begin{equation}
  {J}^u_h(v) \triangleq \sum_{K \in \MTh} \|\nabla v -
  \bmr{p}_h\|_{L^2(K)}^2 + \sum_{e \in \MEh^i} \frac{1}{h} \| \jump{v}
  \|_{L^2(e)}^2 + \sum_{e \in \MEh^b} \frac{1}{h} \| v -
  g\|_{L^2(e)}^2.
  \label{eq:infu}
\end{equation}
To get an approximation to $u$, one may solve the minimization problem
for the functional ${J}^u_h(\cdot)$ in a certain approximation space.
We note that it is very flexible to choose the approximation space for
$u$. For instance, one may use the discontinuous finite element space
$V_h^m$ or the patch reconstructed space proposed in
\cite{li2016discontinuous}. Here we solve the pressure $u$ with the
standard Lagrange finite element space, which is defined as
\begin{displaymath}
  \wh{V}_h^m \triangleq \left\{ v_h \in C(\Omega) \ |\ v_h|_K \in
  \mathbb P_m(K), \quad \forall K \in \MTh\right\}.
\end{displaymath}
Due to the continuity of the space $\wh{V}_h^m$, the functional
${J}^u_h(v)$ is simplified as
\begin{equation}
  {J}^u_h(v) = \sum_{K \in \MTh} \|\nabla v -
  \bmr{p}_h\|_{L^2(K)}^2 + \sum_{e \in \MEh^b} \frac{1}{h} \| v -
  g\|_{L^2(e)}^2, \quad \forall v \in H^1(\Omega).
  \label{eq:infucfem}
\end{equation}
The following minimization problem gives the numerical solution to the
pressure $u$ in $\wh{V}_h^m$:
\begin{displaymath}
  \min_{v_h \in \wh{V}_h^m} {J}^u_h(v_h).
\end{displaymath}
The discrete variational problem equivalent to the minimization
problem reads: {\it find $u_h \in \wh{V}_h^m$ such that
\begin{equation}
  {a}^u_h(u_h, v_h) = {l}^u_h(v_h), \quad \forall v_h \in
  \wh{V}_h^m,
  \label{eq:discreteuinf}
\end{equation}
where the bilinear form ${a}^u_h(\cdot, \cdot)$ is given by
\begin{displaymath}
  {a}^u_h(u_h, v_h) = \sum_{K \in \MTh} \int_K \nabla u_h \cdot
  \nabla v_h \d{x} + \sum_{e \in \MEh^b} \int_e \frac{1}{h} u_h v_h
  \d{s},
\end{displaymath}
and the linear form ${l}^u_h(\cdot)$ is given by
\begin{displaymath}
  {l}^u_h = \sum_{K \in \MTh} \int_K \nabla v_h \cdot \bmr{p}_h \d{x}
  + \sum_{e \in \MEh^b} \int_e \frac{1}{h}v_h g \d{s}.
\end{displaymath}}
Analogous to the procedure we solve the flux $\bmr p$, we define
$\enorm{\cdot}_u$ as
\begin{displaymath}
  \enorm{v}_u^2 \triangleq \sum_{K \in \MTh} \| \nabla v\|_{L^2(K)}^2
  + \sum_{e \in \MEh^b} \frac{1}{h} \|v\|_{L^2(e)}^2, \quad \forall v
  \in H^1(\Omega).
\end{displaymath}
The inequality $\|v\|_{L^2(\Omega)} \leq C \enorm{v}_u$ \cite[Lemma
2.1]{arnold1982interior} ensures $\enorm{\cdot}_u$ is actually a norm
on $H^1(\Omega)$, which actually guarantees the unisolvability of the
problem \eqref{eq:discreteuinf}. $\enorm{\cdot}_u$ is referred as the
{\it energy norm} on $\wh{V}_h^m$ since now on. Further, the error
estimate with respect to $\enorm{\cdot}_u$ is given in the theorem
below as:
\begin{theorem}
  Let the solution $u \in H^{m+1}(\Omega)$ and let $u_h \in
  \wh{V}_h^m$ be the solution to \eqref{eq:discreteuinf}, then we have
  \begin{equation}
    \enorm{u - u_h}_u \leq C\|\bmr{p} - \bmr{p}_h
    \|_{\bmr{L}^2(\Omega)} + Ch^m \|u\|_{H^{m+1}(\Omega)},
    \label{eq:uconvergence}
  \end{equation}
  where $\bmr{p}_h$ is the solution to \eqref{eq:discretepinf}.
  \label{th:uconvergence}
\end{theorem}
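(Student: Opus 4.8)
The plan is to view the discrete problem \eqref{eq:discreteuinf} as a symmetric coercive Galerkin problem whose only inconsistency is that its data is the numerical flux $\bmr{p}_h$ rather than the exact flux $\bmr{p} = \nabla u$; the bound \eqref{eq:uconvergence} is then a Strang-type estimate. First I would record the two structural properties of $a^u_h(\cdot,\cdot)$: coercivity $a^u_h(v,v) = \enorm{v}_u^2$ for all $v \in H^1(\Omega)$, and boundedness $a^u_h(w,v) \le C \enorm{w}_u \enorm{v}_u$, both immediate from the Cauchy--Schwarz inequality. Together with the fact that $\enorm{\cdot}_u$ is a norm on $H^1(\Omega)$, these reprove well-posedness, but more to the point they set up the error analysis.

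Next I would derive the error equation. Since the exact solution satisfies $\nabla u = \bmr{p}$ in $\Omega$ and $u = g$ on $\partial\Omega$, one has $a^u_h(u, v_h) = \sum_{K\in\MTh}\int_K \bmr{p}\cdot\nabla v_h \d{x} + \sum_{e\in\MEh^b}\int_e \frac1h g v_h \d{s}$ for every $v_h \in \wh{V}_h^m$. Subtracting \eqref{eq:discreteuinf} gives $a^u_h(u - u_h, v_h) = \sum_{K\in\MTh}\int_K(\bmr{p} - \bmr{p}_h)\cdot\nabla v_h \d{x}$, which is the only place the flux error enters. It is worth stressing that because $\wh{V}_h^m \subset C(\Omega) \subset H^1(\Omega)$ the test function $v_h$ carries no inter-element jumps, so no integration by parts is needed and the consistency term is genuinely $\bmr{p} - \bmr{p}_h$ and nothing involving jump terms.

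Then comes the standard C\'ea argument. For an arbitrary $v_h \in \wh{V}_h^m$, write $\enorm{u_h - v_h}_u^2 = a^u_h(u_h - u, u_h - v_h) + a^u_h(u - v_h, u_h - v_h)$; bound the first term via the error equation and the inequality $\sum_{K\in\MTh}\|\nabla(u_h - v_h)\|_{L^2(K)}^2 \le \enorm{u_h - v_h}_u^2$, the second by boundedness, and divide through by $\enorm{u_h - v_h}_u$ to obtain $\enorm{u_h - v_h}_u \le \|\bmr{p} - \bmr{p}_h\|_{\bmr{L}^2(\Omega)} + C\enorm{u - v_h}_u$. The triangle inequality then yields $\enorm{u - u_h}_u \le \|\bmr{p} - \bmr{p}_h\|_{\bmr{L}^2(\Omega)} + C\enorm{u - v_h}_u$ for every $v_h$. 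Finally I would take $v_h = I_h u$, the Lagrange interpolant of $u$ in $\wh{V}_h^m$: standard interpolation estimates control the elementwise gradient contribution by $Ch^m\|u\|_{H^{m+1}(\Omega)}$, while the boundary contribution $\sum_{e\in\MEh^b}\frac1h\|u - I_h u\|_{L^2(e)}^2$ is handled by applying the trace inequality \eqref{eq:traceinequality} to $u - I_h u$, which again produces $Ch^{2m}\|u\|_{H^{m+1}(\Omega)}^2$. Collecting these gives \eqref{eq:uconvergence}.

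I do not expect a genuine obstacle here: the whole argument reduces to coercivity, a one-line error equation, and interpolation. The only points deserving care are bookkeeping ones — keeping the consistency term free of jump contributions (guaranteed by the continuity of $\wh{V}_h^m$) and checking that the $h^{-1}$-weighted boundary terms scale correctly after the trace inequality. Note also that $\|\bmr{p} - \bmr{p}_h\|_{\bmr{L}^2(\Omega)}$ is deliberately left unexpanded on the right-hand side; it is the quantity that one further bounds, sub-optimally in general, by combining Lemma \ref{le:pisnorm} with Theorem \ref{th:pconvergence}.
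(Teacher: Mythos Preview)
Your argument is correct. The paper, however, organises the proof around the minimisation formulation rather than the variational one: it simply uses $J^u_h(u_h)\le J^u_h(u_I)$ for the Lagrange interpolant $u_I$, then expands $\enorm{u-u_h}_u^2$ by inserting $\bmr{p}_h$ into the gradient term (and using $u=g$ on $\partial\Omega$) to get $\enorm{u-u_h}_u^2\le C\bigl(J^u_h(u_I)+\|\bmr{p}-\bmr{p}_h\|_{\bmr{L}^2(\Omega)}^2\bigr)\le C\bigl(\enorm{u-u_I}_u^2+\|\bmr{p}-\bmr{p}_h\|_{\bmr{L}^2(\Omega)}^2\bigr)$, after which the interpolation estimate finishes the job. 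Your Strang/C\'ea route is equivalent and arguably more transparent, since it isolates the error equation $a^u_h(u-u_h,v_h)=\sum_K\int_K(\bmr{p}-\bmr{p}_h)\cdot\nabla v_h\,\d{x}$ explicitly; this is precisely the identity the paper later invokes (without rederiving it) in the duality proof of the $L^2$ estimate in Theorem~\ref{th:ul2estiate}. The paper's version is marginally shorter because the minimality inequality replaces the coercivity/boundedness bookkeeping, but both approaches rest on the same two ingredients: the data perturbation $\bmr{p}\mapsto\bmr{p}_h$ and the approximation power of $\wh{V}_h^m$.
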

\begin{proof}
  Let $u_I \in \wh{V}_h^m$ be the interpolant of $u$ and we have that
  \begin{displaymath}
    \begin{aligned}
      {J}^u_h(u_h) &\leq {J}^u_h(u_I) \\
      \enorm{u - u_h}_u^2 &\leq C({J}^u_h(u_I) + \|\nabla 
      u - \bmr{p}_h\|^2_{\bmr{L}^2(\Omega)}) \\
      &\leq C \left( \enorm{u - u_I}_u^2 + \|\bmr{p} - \bmr{p}_h
      \|_{L^2(\Omega)}^2\right). \\
    \end{aligned}
  \end{displaymath}
  The approximation property of the space $\wh{V}_h^m$ gives us
  \cite{ciarlet2002finite}: 
  \begin{displaymath}
    \enorm{u - u_I}_u \leq Ch^m\|u\|_{H^{m+1}(\Omega)},
  \end{displaymath}
  which yields the estimate \eqref{eq:uconvergence} and completes the
  proof.
\end{proof}
Then we can have the error estimate under $L^2$-norm:
\begin{theorem}
  Let the solution $u \in H^{m+1}(\Omega)$ and let $u_h
  \in \wh{V}_h^m$ be the solution to \eqref{eq:discreteuinf}, then we
  have
  \begin{equation}
    \|u - u_h\|_{L^2(\Omega)} \leq C_0 \| \bmr{p} -
    \bmr{p}_h\|_{L^2(\Omega)} + C_1h^{m+1}\|u\|_{H^{m+1}(\Omega)},
    \label{eq:ul2estiate}
  \end{equation}
  where $\bmr{p}_h$ is the solution to \eqref{eq:discretepinf}.
  \label{th:ul2estiate}
\end{theorem}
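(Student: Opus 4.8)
The plan is to prove \eqref{eq:ul2estiate} by an Aubin--Nitsche duality argument built on the bilinear form ${a}^u_h(\cdot,\cdot)$ of \eqref{eq:discreteuinf}. First I would record a Galerkin orthogonality. Since $\bmr{p} = \nabla u$ and $u = g$ on $\partial\Omega$, one has ${a}^u_h(u,v_h) = \sum_{K\in\MTh}\int_K(\bmr{p}-\bmr{p}_h)\cdot\nabla v_h\,\d{x} + {l}^u_h(v_h)$, so subtracting \eqref{eq:discreteuinf} gives ${a}^u_h(u-u_h,v_h) = \sum_{K\in\MTh}\int_K(\bmr{p}-\bmr{p}_h)\cdot\nabla v_h\,\d{x}$ for every $v_h\in\wh{V}_h^m$; note that $u-u_h\in H^1(\Omega)$ because $\wh{V}_h^m\subset C(\Omega)$, so no interior jump terms appear. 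Next I would introduce the adjoint solution $z\in H^1(\Omega)$ characterized by ${a}^u_h(v,z) = \int_\Omega v(u-u_h)\,\d{x}$ for all $v\in H^1(\Omega)$; it exists and is unique by Lax--Milgram, using ${a}^u_h(v,v)=\enorm{v}_u^2$ together with the inequality $\|v\|_{L^2(\Omega)}\le C\enorm{v}_u$ invoked just before Theorem \ref{th:uconvergence}. This $z$ is the weak solution of $-\Delta z = u-u_h$ in $\Omega$ with the Robin condition $\partial_{\un}z + h^{-1}z = 0$ on $\partial\Omega$, and I would invoke elliptic regularity in the form $\|z\|_{H^2(\Omega)}\le C\|u-u_h\|_{L^2(\Omega)}$ with $C$ independent of $h$.

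The estimate itself would then proceed as follows: take $v=u-u_h$, let $z_I\in\wh{V}_h^m$ be the Lagrange interpolant of $z$, and use Galerkin orthogonality to split
\begin{displaymath}
  \|u-u_h\|_{L^2(\Omega)}^2 = {a}^u_h(u-u_h,z-z_I) + \sum_{K\in\MTh}\int_K(\bmr{p}-\bmr{p}_h)\cdot\nabla z_I\,\d{x}.
\end{displaymath}
The first term I would bound by $\enorm{u-u_h}_u\,\enorm{z-z_I}_u$ via Cauchy--Schwarz on ${a}^u_h$, and the second by $\|\bmr{p}-\bmr{p}_h\|_{L^2(\Omega)}\|\nabla z_I\|_{L^2(\Omega)}$. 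Plugging in the standard interpolation bounds $\enorm{z-z_I}_u\le Ch\|z\|_{H^2(\Omega)}$ and $\|\nabla z_I\|_{L^2(\Omega)}\le C\|z\|_{H^2(\Omega)}$, then the regularity bound for $\|z\|_{H^2(\Omega)}$, and finally dividing by $\|u-u_h\|_{L^2(\Omega)}$, I would arrive at $\|u-u_h\|_{L^2(\Omega)}\le Ch\,\enorm{u-u_h}_u + C\|\bmr{p}-\bmr{p}_h\|_{L^2(\Omega)}$. Inserting the energy estimate \eqref{eq:uconvergence} of Theorem \ref{th:uconvergence} for $\enorm{u-u_h}_u$ and absorbing the extra factor $h$ into the mesh-independent constants then yields exactly \eqref{eq:ul2estiate}.

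The delicate point, and the one I expect to be the main obstacle, is the $h$-uniform $H^2$-regularity of the adjoint problem: because the boundary data in \eqref{eq:discreteuinf} is imposed only by penalization, the natural adjoint problem carries the Robin condition $\partial_{\un}z + h^{-1}z = 0$ rather than a clean Dirichlet condition, and one has to ensure $\|z\|_{H^2(\Omega)}\le C\|u-u_h\|_{L^2(\Omega)}$ with $C$ independent of $h$ (which is where convexity or smoothness of $\Omega$, and a uniform control of the Robin-to-Dirichlet limit, come in). It is tempting to replace $z$ by the Dirichlet solution in $H^1_0(\Omega)$, but then the identity picks up a boundary term $-\int_{\partial\Omega}(u-u_h)\,\partial_{\un}z$, and estimating it through $\|u-u_h\|_{L^2(\partial\Omega)}\le h^{1/2}\enorm{u-u_h}_u$ only produces an $O(h^{m+1/2})$ contribution, which falls half an order short of \eqref{eq:ul2estiate}; this is why working with the adjoint problem that is exactly dual to ${a}^u_h(\cdot,\cdot)$ is essential.
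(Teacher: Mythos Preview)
Your route differs from the paper's, and in an instructive way. The paper does \emph{not} use the Robin adjoint you propose; it uses exactly the Dirichlet dual $-\Delta w=\psi$, $w|_{\partial\Omega}=0$ that you dismiss in your last paragraph. The boundary term $-\int_{\partial\Omega}(u-u_h)\,\partial_{\un}w\,\d{s}$ does appear, but instead of the crude bound $\|u-u_h\|_{L^2(\partial\Omega)}\le h^{1/2}\enorm{u-u_h}_u$ (which indeed costs half an order), the paper estimates it by the duality pairing $\|u-u_h\|_{H^{-1/2}(\partial\Omega)}\,\|\partial_{\un}w\|_{H^{1/2}(\partial\Omega)}$ and first proves the auxiliary bound
\[
  \|u-u_h\|_{H^{-1/2}(\partial\Omega)}\le C_0h\,\enorm{u-u_h}_u + C_1h\,\|\bmr{p}-\bmr{p}_h\|_{L^2(\Omega)}.
\]
This is obtained by testing against $\tau\in H^{1/2}(\partial\Omega)$, lifting $\tau$ to its harmonic extension $\alpha\in H^1(\Omega)$, and exploiting the Galerkin relation ${a}^u_h(u-u_h,\alpha_I)=(\bmr{p}-\bmr{p}_h,\nabla\alpha_I)$ together with $\|\alpha\|_{H^1(\Omega)}\le C\|\tau\|_{H^{1/2}(\partial\Omega)}$. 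With this $H^{-1/2}$ estimate in hand the boundary term contributes the correct order, and the rest of the duality argument is as you wrote. So your claim that the Dirichlet dual is ``essential'' to avoid is the opposite of what the paper does.

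Your own Robin-adjoint strategy is formally clean---the identity $\|u-u_h\|_{L^2}^2={a}^u_h(u-u_h,z-z_I)+(\bmr{p}-\bmr{p}_h,\nabla z_I)$ is correct and the subsequent bounds are routine---but the whole argument hinges on the $h$-uniform regularity $\|z\|_{H^2(\Omega)}\le C\|u-u_h\|_{L^2(\Omega)}$ for $-\Delta z=u-u_h$, $\partial_{\un}z+h^{-1}z=0$, which you flag but do not establish. That bound is plausible on convex $\Omega$ (it is the singular-perturbation limit toward the Dirichlet problem) yet it is not standard and would itself require a proof; the paper's harmonic-extension trick sidesteps this entirely by needing only the familiar $H^2$ regularity of the homogeneous Dirichlet problem.
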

\begin{proof}
  Let $e_h = u - u_h$ and from the definition of ${a}^u_h(\cdot,
  \cdot)$, one see that 
  \begin{displaymath}
    {a}^u_h(e_h, v_h) = (\bmr{p} - \bmr{p}_h, v_h), \quad \forall v_h
    \in \wh{V}_h^m.
  \end{displaymath}
  We first show that
  $\|e_h\|_{H^{-1/2}(\partial \Omega)} \leq C_0h \enorm{e_h}_u + C_1h
  \|\bmr{p} - \bmr{p}_h\|_{L^2(\Omega)}$, where
  \begin{displaymath}
    \|e_h\|_{H^{-1/2}(\partial \Omega)} = \sup_{\tau \in
    H^{1/2}(\partial \Omega)} \frac{\left( e_h, \tau
    \right)_{L^2(\partial \Omega)}}{\|\tau\|_{H^{1/2}(\partial
    \Omega)}}.
  \end{displaymath}
  We let $\alpha \in H^1(\Omega)$ which solves $\Delta \alpha = 0$ in
  $\Omega$, $\alpha = \tau$ on $\partial \Omega$, and we let $\alpha_I
  \in \wh{V}_h^m$ be interpolant of $\alpha$. Then we have that 
  \begin{displaymath}
    \begin{aligned}
      (e_h, \tau)_{L^2(\partial \Omega)} &= h(h^{-1}(e_h,
      \alpha)_{L^2(\partial \Omega)}) \\
      &= h(h^{-1}(e_h, \alpha)_{L^2(\partial \Omega)} - {a}^u_h(e_h,
      \alpha_I)) + h
      (\bmr{p} - \bmr{p}_h, \alpha_I) \\
      &= h(h^{-1}(e_h, \alpha - \alpha_I)_{L^2(\partial \Omega)} -
      (\nabla e_h, \nabla \alpha_I)_{L^2(\Omega)}) + h (\bmr{p} -
      \bmr{p}_h, \alpha_I) \\
      &\leq C_0h \enorm{e_h}_u (\|h(\alpha - \alpha_I)\|_{L^2(\partial
        \Omega)} + \|\nabla \alpha_I\|_{L^2(\Omega)}) + C_1h \|\bmr{p}
      -
      \bmr{p}_h\|_{L^2(\Omega)} \|\alpha_I\|_{L^2(\Omega)} \\
      & \leq C_0h \enorm{u_h}_u \|\alpha\|_{H^1(\Omega)} + C_1h
      \|\bmr{p} - \bmr{p}_h\|_{L^2(\Omega)} \|\alpha\|_{H^1(\Omega)}.
      \\
    \end{aligned}
  \end{displaymath}
  We complete the proof by the regularity estimate
  $\|\alpha\|_{H^1(\Omega)} \leq C \|\tau\|_{H^{1/2}(\partial
  \Omega)}$. 

  Given $\psi \in L^2(\Omega)$ and we let $w \in H^2(\Omega)$ which
  solves $- \Delta w = \psi$ in $\Omega$, $w = 0$ on $\partial
  \Omega$. We denote by $w_I \in \wh{V}_h^m$ the interpolant of $w$.
  Then we could deduce that
  \begin{displaymath}
    \begin{aligned}
      (e_h, \psi) &= (\nabla e_h, \nabla w) - \left(e_h,
        \frac{\partial
          w}{\partial \un}\right)_{L^2(\partial \Omega)} \\
      &= {a}^u_h(e_h, w - w_I) + (\bmr{p} - \bmr{p}_h, w_I) -
      \left(e_h,
        \frac{\partial w}{\partial \un}\right)_{L^2(\partial \Omega)} \\
      & \leq Ch \enorm{e_h} \|w\|_{H^2(\Omega)} + \|\bmr{p} -
      \bmr{p}_h \|_{\bmr{L}^2(\Omega)} \|w\|_{H^2(\Omega)} +
      \|e_h\|_{H^{-1/2}(\partial \Omega)} \left\|\frac{\partial
          w}{\partial \un} \right\|_{H^{1/2}(\partial \Omega)}.
    \end{aligned}
  \end{displaymath}
  Let $\psi = e_h$, and combining the bound of
  $\|e_h\|_{H^{-1/2}(\Omega)}$, the regularity estimate
  $\|w\|_{H^2(\Omega)} \leq C \|\psi\|_{L^2(\Omega)}$ and the
  approximation property of $\enorm{e_h}_u$ could yield the estimate
  \eqref{eq:ul2estiate}, which completes the proof.
\end{proof}
\begin{remark}
  Until now the method we established is only for the problem with the
  Dirichlet boundary condition. For the Neumann boundary condition
  $\nabla u \cdot \un = g$ on $\partial \Omega$, the boundary term in
  \eqref{eq:funcp} and \eqref{eq:infucfem} should be modified as
  \begin{displaymath}
    \sum_{e \in \MEh^b} \frac{1}{h} \|\bmr{q}_h \cdot \un -
    g\|_{L^2(e)}^2 \qquad \text{and} \qquad \sum_{e \in \MEh^b} \frac{1}{h} \left\|
      \frac{\partial v}{\partial \un} - g\right\|_{L^2(e)}^2,
  \end{displaymath}
  respectively. It is almost trivial to extend our method in this
  section to the problem with the Neumann boundary condition.
\end{remark}


\section{Numerical Results}
\label{sec:numericalresults}
In this section, we conduct some numerical experiments to show the
accuracy and efficiency of the proposed method in
Section~\ref{sec:lsfem}. For simplicity, we select the cardinality $\#
S(K)$ uniformly and we list a group of reference values of $\# S(K)$
for different $m$ in Tab.~\ref{tab:numSK}. 

\begin{table}
  \centering
  \renewcommand\arraystretch{1.3}
  \begin{tabular}{p{2.0cm}| p{2.0cm}|p{1.5cm}|p{1.5cm}|p{1.5cm} }
    \hline\hline
    \multirow{2}{*}{$d = 2$} &  $m$ & 1 & 2 & 3 \\
    \cline{2-5}
     & $\# S(K)$ & 6 & 10 & 16 \\
    \hline
    \multirow{2}{*}{$d = 3$} &  $m$ & 1 & 2 & 3 \\
    \cline{2-5}
     & $\# S(K)$ & 8 & 15 & 25 \\
     \hline \hline
  \end{tabular}
  \caption{$\# S(K)$ for $1 \leq m \leq 3$.}
  \label{tab:numSK}
\end{table}

\subsection{Convergence order study}
We first examine the numerical convergence to verify the theoretical
prediction and exhibit the flexibility of our method.

\paragraph{\bf Example 1} We first consider a two-dimensional Poisson
problem with Dirichlet boundary condition on the domain $\Omega = [0,
1] \times [0, 1]$. The exact solution $u(x, y)$ is taken as
\begin{displaymath}
  u(x, y) = \sin(2\pi x) \sin(4\pi y), 
\end{displaymath}
and the source term $f$ and the boundary data $g$ are chosen
accordingly.

We solve this problem on a series of triangular meshes (see
Fig.~\ref{fig:triangulation}) with mesh size $h = 1/10$, $1/20$,
$c\dots$, $1/80$ and we first use the space pairs $\bmr{U}_h^m \times
\wh{V}_h^m(1 \leq m \leq 3)$ to solve the flux and pressure. In this
setting, from \eqref{eq:ul2estiate} we could see that the optimal
convergence order of $u_h$ depends on the convergence rate of
$\|\bmr{p} - \bmr{p}_h\|_{\bmr{L}^2(\Omega)}$. Although, we can not
develop a theoretical verification for the optimal convergence of
$\bmr{p}_h$ under $L^2$ norm, the computed convergence rates of
$\|\bmr{p} - \bmr{p}_h\|_{\bmr{L}^2(\Omega)}$ seem optimal for odd
$m$.  The $L^2$ norm and the energy norm of the errors in the
approximation to the exact solution are gathered in
Tab.~\ref{tab:ex1errorm1}. We could observe that for odd $m$, the
errors $\|u -u_h\|_{L^2(\Omega)}$, $\enorm{ u - u_h}_u$,  $\|\bmr{p} -
\bmr{p}_h\|_{\bmr{L}^2(\Omega)}$ and $\enorm{\bmr{p} -
\bmr{p}_h}_{\bmr{p}}$ converge to zero optimally as the mesh is
refined. For even $m$, the orders of convergence under $L^2$-norm are
suboptimal. Moreover, from the estimate \eqref{eq:ul2estiate} one
could observe that if we decrease the space approximating pressure by
one order, we could obtain the optimality for $u$ approximations.  The
errors with the space pairs $\bmr{U}_h^m \times \wh{V}_h^{m-1}$ are
collected in Tab.~\ref{tab:ex1errorm2}, which clearly shows the
optimal convergence of $u_h$ for both measurements.  Besides, we note
that all the convergence rates are consistence with the theoretical
predictions.

\begin{figure}[!htp]
  \centering
  \includegraphics[width=0.4\textwidth]{./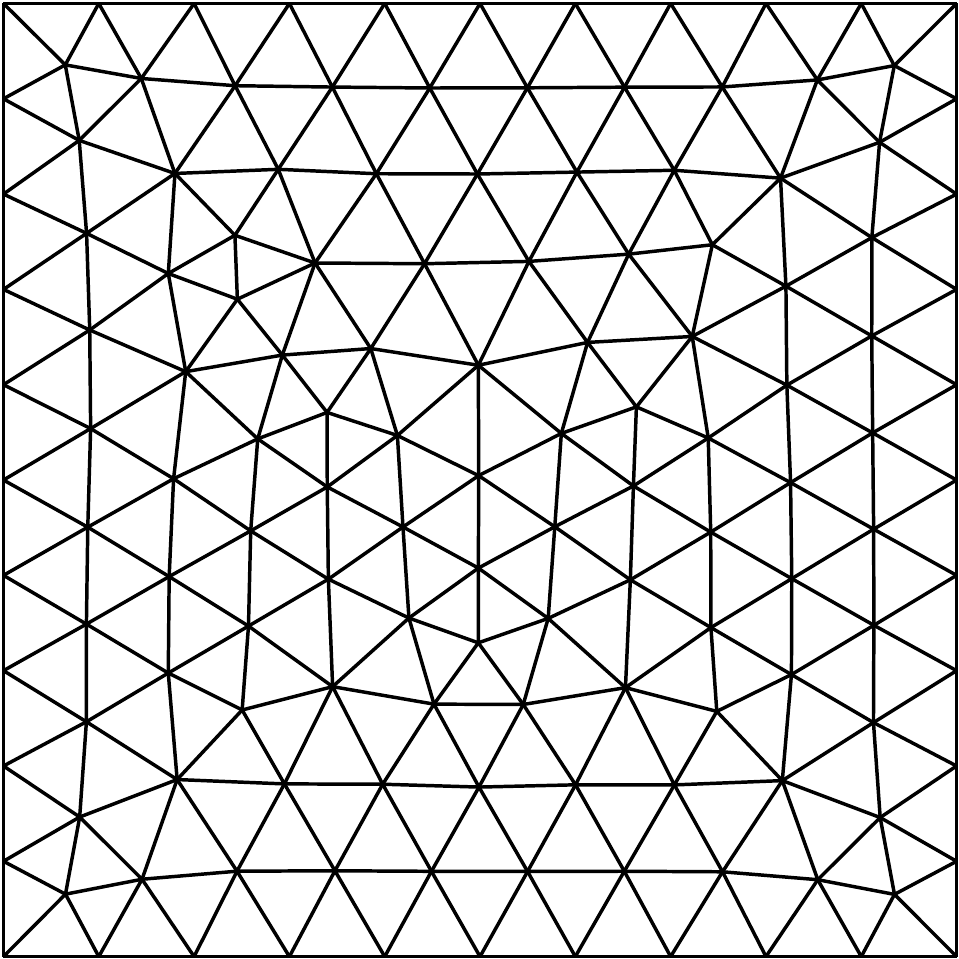}
  \hspace{25pt}
  \includegraphics[width=0.4008\textwidth]{./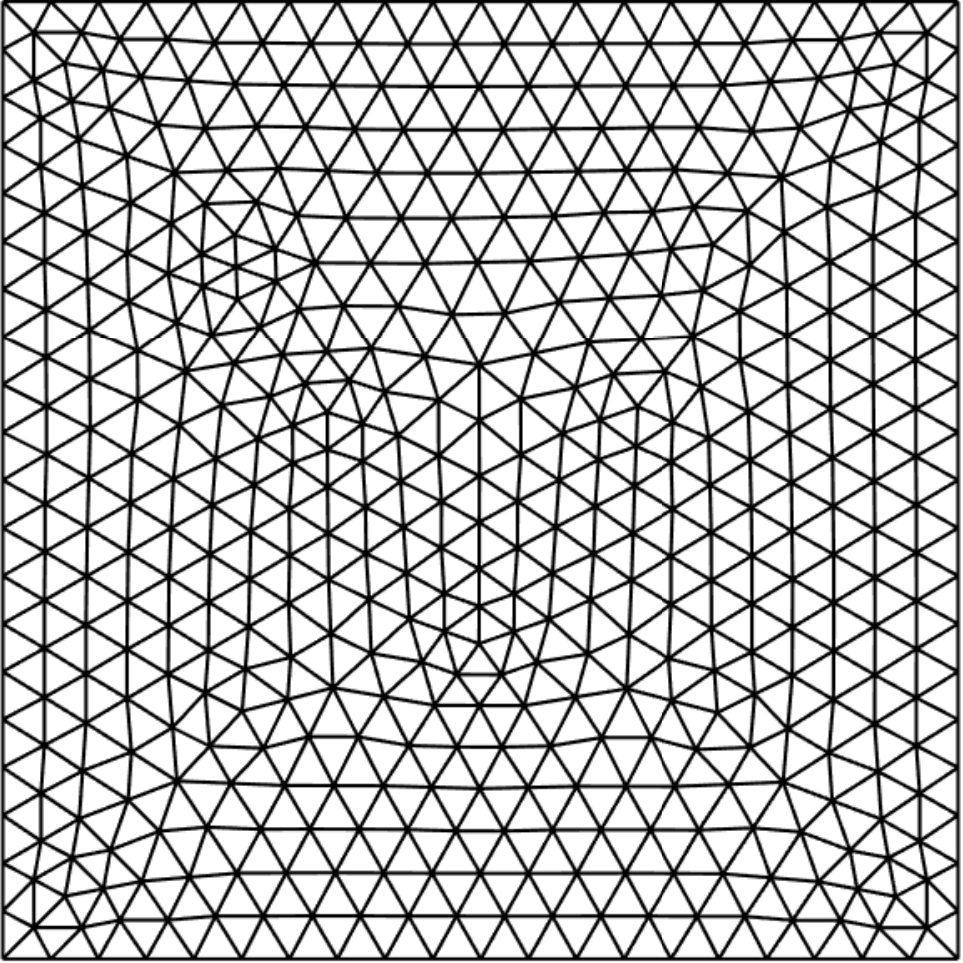}
  \caption{The triangular meshes with mesh size $h = 1/10$ (left) and
  $h = 1/20$ for Example 1.}
  \label{fig:triangulation}
\end{figure}

\begin{table}
  \centering
  \renewcommand\arraystretch{1.2}
  \begin{tabular}{p{0.3cm}| p{0.2cm} p{1.8cm} p{0.8cm} p{1.8cm}
    p{0.8cm} p{1.8cm} p{0.8cm} p{1.8cm} p{0.8cm}}
    \hline\hline
    $m$ & & $\|e_u\|_{L^2(\Omega)} $ & order & $\enorm{e_u}_u$ & order
    & $\| e_{\bmr{p}} \|_{\bmr{L}^2(\Omega)} $ & order &
    $\enorm{e_{\bmr{p}}}_{\bmr{p}}$ & order \\
    \hline
   \multirow{5}{*}{$1$} 
   & 1 & 1.0602e-01 & -    & 2.5550e-00 & -    
       & 1.1553e-00 & -    & 2.9109e+01 & -    \\
   & 2 & 3.0872e-02 & 1.80 & 1.2677e-00 & 1.01 
       & 3.3347e-01 & 1.80 & 1.5319e+01 & 0.93 \\
   & 3 & 8.3590e-03 & 1.90 & 6.3053e-01 & 1.01 
       & 8.7712e-02 & 1.90 & 7.9176e+00 & 0.95 \\
   & 4 & 2.1548e-03 & 1.96 & 3.1463e-01 & 1.00 
       & 2.2647e-02 & 1.96 & 4.0133e+00 & 0.98 \\
   & 5 & 5.4473e-04 & 1.98 & 1.5723e-02 & 1.00 
       & 5.7033e-03 & 1.98 & 2.0137e+00 & 1.00 \\
    \hline
    \multirow{5}{*}{$2$}
   & 1 & 5.5862e-02 & -    & 9.3425e-01 & -    
       & 9.1461e-01 & -    & 8.0168e+00 & - \\
   & 2 & 1.8898e-02 & 1.57 & 2.8628e-01 & 1.71 
       & 2.7402e-01 & 1.73 & 1.7807e+00 & 2.17 \\
   & 3 & 4.9746e-03 & 1.93 & 7.3469e-02 & 1.93 
       & 7.1190e-02 & 1.95 & 4.1888e-01 & 2.08 \\
   & 4 & 1.2538e-03 & 1.99 & 1.8776e-02 & 1.98 
       & 1.8016e-02 & 1.98 & 1.0111e-01 & 2.03 \\
   & 5 & 3.1393e-04 & 2.00 & 4.7137e-03 & 1.99 
       & 4.5126e-03 & 2.00 & 2.4633e-02 & 2.02 \\
   \hline
   \multirow{5}{*}{$3$}
   & 1 & 5.2485e-03 & -    & 1.6872e-01 & -    
       & 1.2492e-01 & -    & 3.7196e+00 & - \\
   & 2 & 3.9516e-04 & 3.73 & 1.9952e-02 & 3.07 
       & 9.2700e-03 & 3.75 & 4.6565e-01 & 2.95 \\
   & 3 & 2.1869e-05 & 4.17 & 2.0437e-03 & 3.28 
       & 5.9833e-04 & 3.95 & 6.0447e-02 & 2.97 \\
   & 4 & 1.1300e-06 & 4.27 & 2.2652e-04 & 3.17 
       & 3.8808e-05 & 3.95 & 7.7175e-03 & 2.97 \\
   & 5 & 6.0716e-08 & 4.07 & 2.7352e-05 & 3.05 
       & 2.4584e-06 & 3.98 & 9.7343e-04 & 2.99 \\
   \hline\hline
  \end{tabular}
  \caption{Example 1. The errors $e_u = u - u_h, e_{\bmr{p}} = \bmr{p}
  - \bmr{p}_h$, and the orders of convergence with the spaces
  $\bmr{U}_h^m \times \wh{V}_h^m(1 \leq m \leq 3)$.}
  \label{tab:ex1errorm1}
\end{table}

\begin{table}
  \centering
  \renewcommand\arraystretch{1.2}
  \begin{tabular}{p{0.3cm}| p{0.2cm} p{1.8cm} p{0.8cm} p{1.8cm}
    p{0.8cm} p{1.8cm} p{0.8cm} p{1.8cm} p{0.8cm}}
    \hline\hline
    $m$ & & $\|e_u\|_{L^2(\Omega)} $ & order & $\enorm{e_u}_u$ & order
    & $\| e_{\bmr{p}} \|_{\bmr{L}^2(\Omega)} $ & order &
    $\enorm{e_{\bmr{p}}}_{\bmr{p}}$ & order \\
    \hline
    \multirow{5}{*}{$2$}
   & 1 & 6.8296e-02 & -    & 2.4561e-00 & -    
       & 9.1461e-01 & -    & 8.0168e+00 & - \\
   & 2 & 1.7533e-02 & 1.96 & 1.2484e-00 & 0.97 
       & 2.7402e-01 & 1.73 & 1.7807e+00 & 2.17 \\
   & 3 & 4.4126e-03 & 1.99 & 6.2683e-01 & 0.99 
       & 7.1190e-02 & 1.95 & 4.1888e-01 & 2.08 \\
   & 4 & 1.1050e-03 & 2.00 & 3.3137e-01 & 1.00 
       & 1.8016e-02 & 1.98 & 1.0111e-01 & 2.03 \\
   & 5 & 2.7636e-04 & 2.00 & 1.5691e-01 & 1.00 
       & 4.5126e-03 & 2.00 & 2.4633e-02 & 2.02 \\
   \hline
   \multirow{5}{*}{$3$}
   & 1 & 4.9662e-03 & -    & 3.8263e-01 & -    
       & 1.2492e-01 & -    & 3.7196e+00 & - \\
   & 2 & 6.3248e-04 & 2.97 & 9.7317e-02 & 1.97 
       & 9.2700e-03 & 3.75 & 4.6565e-01 & 2.95 \\
   & 3 & 7.9437e-05 & 2.99 & 2.4434e-02 & 1.99 
       & 5.9833e-04 & 3.95 & 6.0447e-02 & 2.97 \\
   & 4 & 9.9415e-06 & 3.00 & 6.1151e-03 & 2.00 
       & 3.8808e-05 & 3.95 & 7.7175e-03 & 2.97 \\
   & 5 & 1.2430e-06 & 3.00 & 1.5291e-03 & 2.00 
       & 2.4584e-06 & 3.98 & 9.7343e-04 & 2.99 \\
   \hline\hline
  \end{tabular}
  \caption{Example 1. The errors $e_u = u - u_h, e_{\bmr{p}} = \bmr{p}
  - \bmr{p}_h$, and the orders of convergence with the spaces
  $\bmr{U}_h^{m} \times \wh{V}_h^{m-1}(2 \leq m \leq 3)$.}
  \label{tab:ex1errorm2}
\end{table}

\paragraph{\bf Example 2} In this example, we consider the sample
problem as in Example 1. But we use a sequence of polygonal meshes
consisting of elements with various geometries (see Fig.
\ref{fig:voroni}), which are generated by {\tt PolyMesher}
\cite{talischi2012polymesher}. We only solve the flux, and we present
the corresponding errors in the energy norm and $L^2$ norm and their
respective computed rates in Tab. ~\ref{tab:ex2error1}.  Again we
observe the optimal convergence for both norms when $m$ is odd. For
even $m$, $\|\bmr{p} - \bmr{p}_h \|_{\bmr{L}^2(\Omega)}$ tends to zero
in a suboptimal way. To apply the method on meshes with different
geometry, it is an advantage inherited from the DG method. On such
meshes, the convergence order is agreed with our error estimates again.

\begin{figure}[!htp]
  \centering
  \includegraphics[width=0.4\textwidth]{./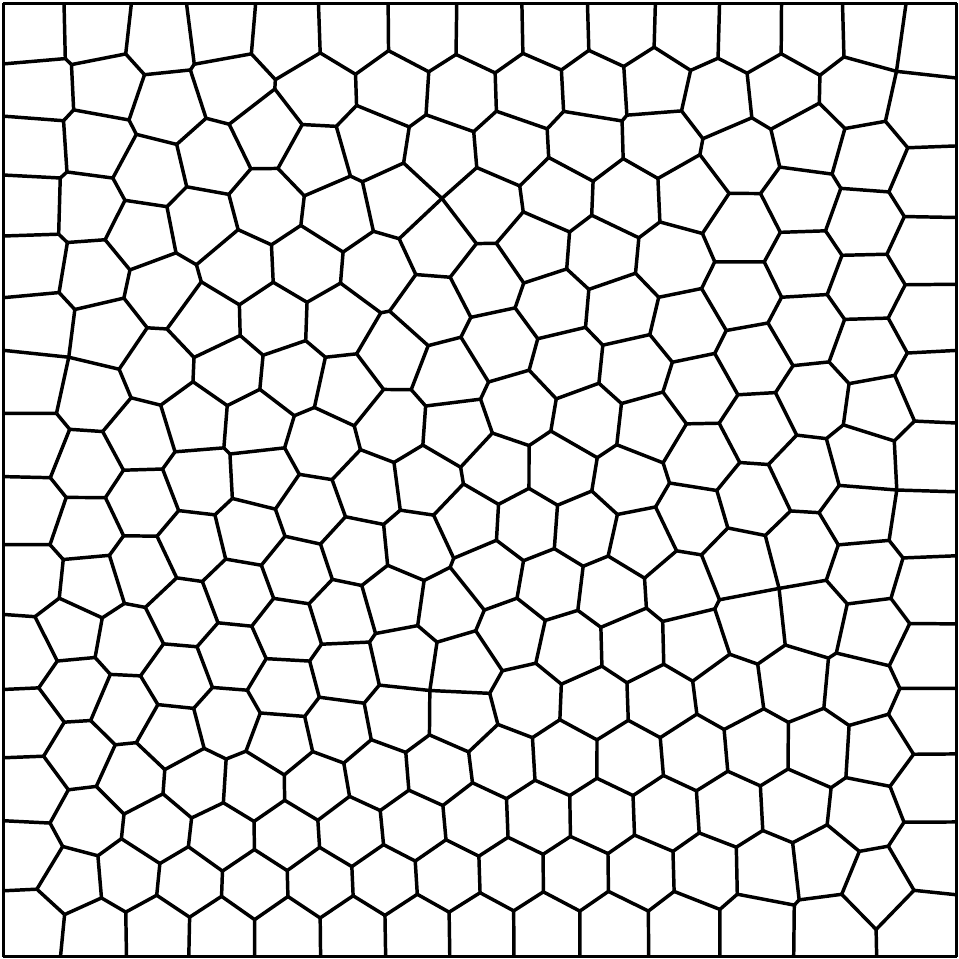}
  \hspace{25pt}
  \includegraphics[width=0.4\textwidth]{./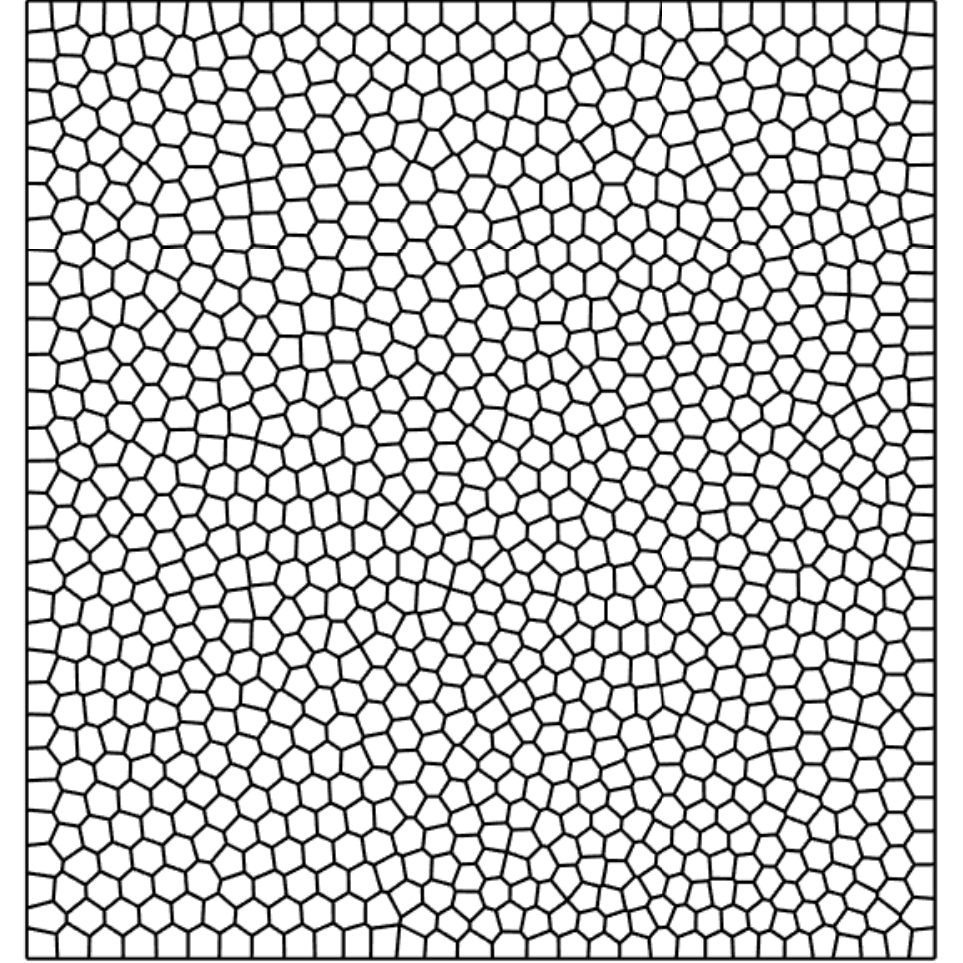}
  \caption{The polygonal meshes with 250 elements (left) / 1000
  elements (right).}
  \label{fig:voroni}
\end{figure}

\begin{table}
  \centering
  \begin{tabular}{p{0.3cm}| p{1.5cm} p{2.5cm} p{2cm} p{2.5cm} p{2cm}}
    \hline\hline
    $m$ & DOFs & $\|e_{\bmr{p}} \|_{\bmr{L}^2(\Omega)}$ & order & $\|
    e_{\bmr{p}}\|_{\bmr{p}}$ & order \\
    \hline
    \multirow{4}{*}{$1$} 
    & 500    & 1.0485e-00 & -    & 2.6456e+01 & -    \\
    & 2000   & 2.7316e-01 & 1.94 & 1.3244e+01 & 0.99 \\
    & 8000   & 6.5948e-02 & 2.05 & 6.5998e-00 & 1.00 \\
    & 32000  & 1.6203e-02 & 2.03 & 3.2658e-00 & 1.01 \\
    \hline
    \multirow{4}{*}{$2$} 
    & 500    & 4.4773e-01 & -    & 6.1493e-00 & -    \\
    & 2000   & 1.2630e-01 & 1.83 & 1.3713e-00 & 2.16 \\
    & 8000   & 3.0209e-02 & 2.06 & 3.3353e-01 & 2.03 \\
    & 32000  & 7.4860e-03 & 2.01 & 8.2873e-02 & 2.01 \\
    \hline
    \multirow{5}{*}{$3$} 
    & 500    & 1.6412e-01 & -    & 4.5508e-00 & -    \\ 
    & 2000   & 1.0449e-02 & 3.97 & 6.2226e-01 & 2.88 \\
    & 8000   & 6.3315e-04 & 4.05 & 8.1210e-02 & 2.95 \\
    & 32000  & 3.8188e-05 & 4.03 & 1.0205e-02 & 2.99 \\
    \hline\hline
  \end{tabular}
  \caption{Example 2. The errors $e_{\bmr{p}} = \bmr{p} - \bmr{p}_h$,
  and the orders of convergence with the spaces $\bmr{U}_h^m(1 \leq m
  \leq
  3)$.}
  \label{tab:ex2error1}
\end{table}

\paragraph{\bf Example 3} In this example, we consider the mild wave
front problem, which is the Poisson equation on the unit square with
Dirichlet boundary conditions. The data functions $f$ and $g$ are
selected such that the exact solution is 
\begin{displaymath} u(x, y) =
  \arctan(\alpha(r - r_0)), \quad (x, y) \in [0, 1]^2,
\end{displaymath} 
where $r = \sqrt{(x - x_0)^2 + (y - y_0)^2}$. The
mild wave front uses $(x_0, y_0) = (-0.05, -0.05)$, $r_0 = 0.7$,
$\alpha = 10$ and it is a problem of near singularities.  For this
problem, the high-order accuracy is preferred \cite{Mitchell2015high}.
We use a sequence of quasi-uniform triangular meshes (see Fig.
\ref{fig:ex3triangulation}) and we solve the problem with spaces
$\bmr{U}_h^m \times \wh{V}_h^m(1 \leq m \leq 3)$. We list the errors
in approximation to $\bmr{p}$ and $u$ in Tab. ~\ref{tab:ex3errorm1}.
It is clear that the proposed method yields the same convergence rates
as the Example 1, which validates our theoretical estimates.

\begin{figure}[!htp]
  \centering
  \includegraphics[width=0.4\textwidth]{./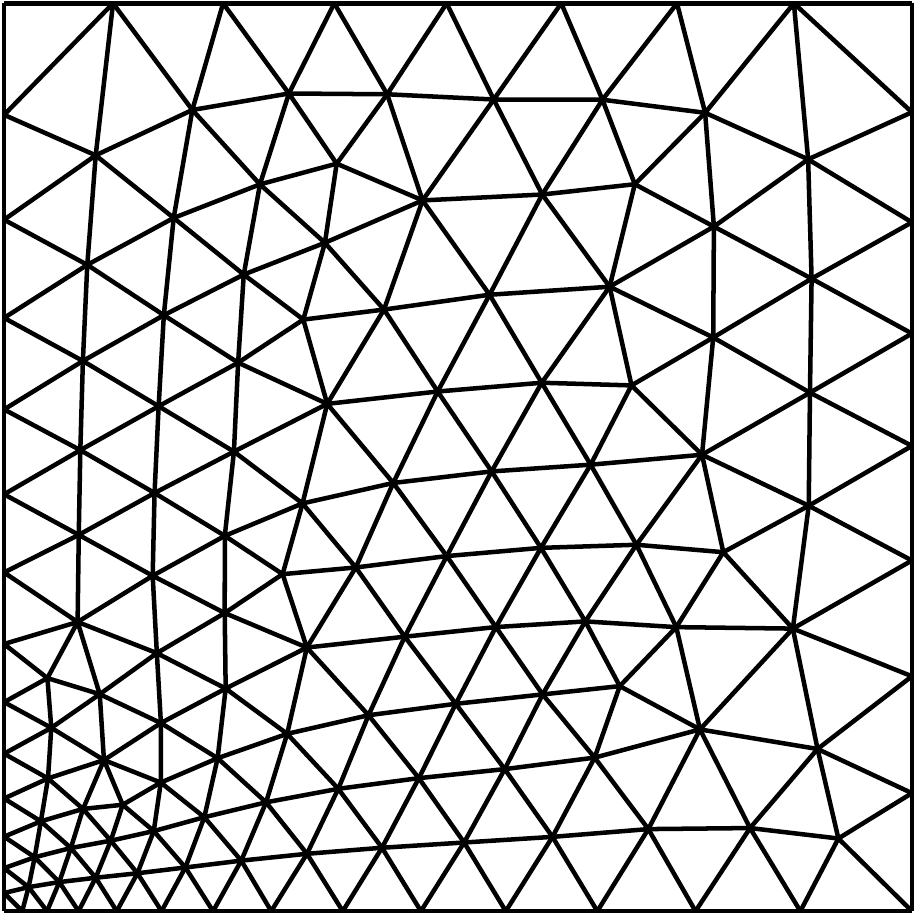}
  \hspace{25pt}
  \includegraphics[width=0.4\textwidth]{./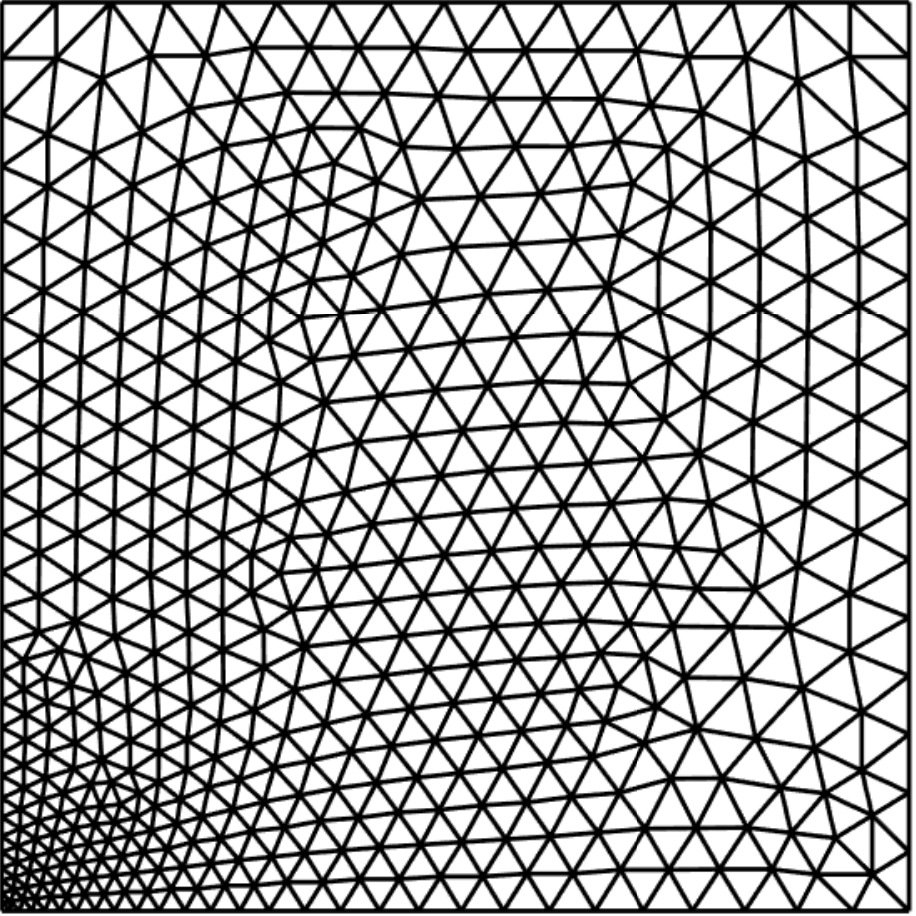}
  \caption{The triangular meshes with 246 elements (left) and
  984 elements (right) for Example 3.}
  \label{fig:ex3triangulation}
\end{figure}

\begin{table}
  \centering
  \renewcommand\arraystretch{1.2}
  \begin{tabular}{p{0.3cm}| p{0.2cm} p{1.8cm} p{0.8cm} p{1.8cm}
    p{0.8cm} p{1.8cm} p{0.8cm} p{1.8cm} p{0.8cm}}
    \hline\hline
    $m$ & & $\|e_u\|_{L^2(\Omega)} $ & order & $\enorm{e_u}_u$ & order
    & $\| e_{\bmr{p}} \|_{\bmr{L}^2(\Omega)} $ & order &
    $\enorm{e_{\bmr{p}}}_{\bmr{p}}$ & order \\
    \hline
   \multirow{5}{*}{$1$} 
   & 1 & 4.3807e-02 & -    & 4.9822e-01 & -    
       & 1.1553e-00 & -    & 1.0256e+01 & -    \\
   & 2 & 1.6473e-02 & 1.41 & 4.0917e-01 & 1.03 
       & 3.3347e-01 & 1.80 & 5.3347e+00 & 0.95 \\
   & 3 & 3.5661e-03 & 2.21 & 1.9515e-01 & 1.07 
       & 8.7712e-02 & 1.90 & 2.6486e+00 & 1.01 \\
   & 4 & 8.6682e-04 & 2.03 & 9.5962e-02 & 1.02 
       & 2.2647e-02 & 1.96 & 1.3231e+00 & 1.00 \\
   & 5 & 2.1263e-04 & 2.03 & 4.7761e-02 & 1.00 
       & 5.7033e-03 & 1.98 & 6.6057e-01 & 1.00 \\
    \hline
    \multirow{5}{*}{$2$}
   & 1 & 1.5200e-02 & -    & 2.9032e-01 & -    
       & 2.4411e-01 & -    & 5.6918e+00 & - \\
   & 2 & 5.3703e-03 & 1.51 & 9.0132e-02 & 1.68 
       & 8.9263e-02 & 1.45 & 1.3870e+00 & 2.03 \\
   & 3 & 1.4510e-03 & 1.89 & 2.5011e-02 & 1.85 
       & 2.5413e-02 & 1.82 & 3.1295e-01 & 2.10 \\
   & 4 & 3.6778e-04 & 1.98 & 6.5013e-02 & 2.00 
       & 6.7113e-03 & 1.92 & 7.1999e-02 & 2.11 \\
   & 5 & 9.1211e-05 & 2.01 & 1.6380e-03 & 1.98 
       & 1.6989e-03 & 1.99 & 1.7550e-02 & 2.03 \\
   \hline
   \multirow{5}{*}{$3$}
   & 1 & 1.0333e-02 & -    & 8.0091e-02 & -    
       & 2.0391e-01 & -    & 5.8500e+00 & - \\
   & 2 & 1.1023e-03 & 3.23 & 1.2076e-02 & 2.72 
       & 1.7701e-02 & 3.52 & 9.7265e-01 & 2.59 \\
   & 3 & 6.7612e-05 & 4.03 & 1.2368e-03 & 3.28 
       & 1.1398e-03 & 3.96 & 1.3999e-01 & 2.80 \\
   & 4 & 4.2528e-06 & 4.00 & 1.2956e-04 & 3.26 
       & 7.4761e-05 & 3.93 & 1.8073e-02 & 2.96 \\
   & 5 & 2.2322e-07 & 4.12 & 1.4319e-05 & 3.17 
       & 4.7259e-06 & 3.98 & 2.2425e-03 & 3.01 \\
   \hline\hline
  \end{tabular}
  \caption{Example 3. The errors $e_u = u - u_h, e_{\bmr{p}} = \bmr{p}
  - \bmr{p}_h$, and the orders of convergence with the spaces
  $\bmr{U}_h^m \times \wh{V}_h^m(1 \leq m \leq 3)$.}
  \label{tab:ex3errorm1}
\end{table}

\paragraph{\bf Example 4} In this example, we exhibit the performance
of the proposed method with the problem with a corner singularity. We
consider the L-shaped domain $\Omega = [-1, 1]^2 \backslash [0, 1)
\times (-1, 0]$ and we use a series of triangular meshes, see Fig.
\ref{fig:Lshapetriangulation}. Following
\cite{Mitchell2013collection}, we let the exact solution be 
\begin{displaymath}
  u(r, \theta) = r^{5/3} \sin(5\theta/3)
\end{displaymath}
in polar coordinate and impose the Dirichlet boundary condition. The
data $f$ and the function $g$ are chosen accordingly. We notice that
$u(r, \theta)$ only belongs to $H^{2 + s}$ with $s < 2/3$. In Tab.
\ref{tab:ex4error}, we list the errors measured in the energy norm and
$L^2$ norm for both flux and pressure. Here we observe that the error
$\enorm{\bmr{p} - \bmr{p}_h}_{\bmr{p}}$ decreases at the rate
$O(h^{2/3})$ which matches with the fact that $\bmr{p}$ only belongs
to $H^{5/3 - \varepsilon}(\Omega)$. The computed orders of $\|\bmr{p} -
\bmr{p}_h\|_{L^2(\Omega)}$, $\enorm{u - u_h}_u$ and $\|u -
u_h\|_{L^2(\Omega)}$ are about $1$. A possible explanation of the
rates may be traced back to the lack of $H^3$-regularity of the exact
solution on the whole domain.

\begin{figure}[!htp]
  \centering
  \includegraphics[width=0.4\textwidth]{./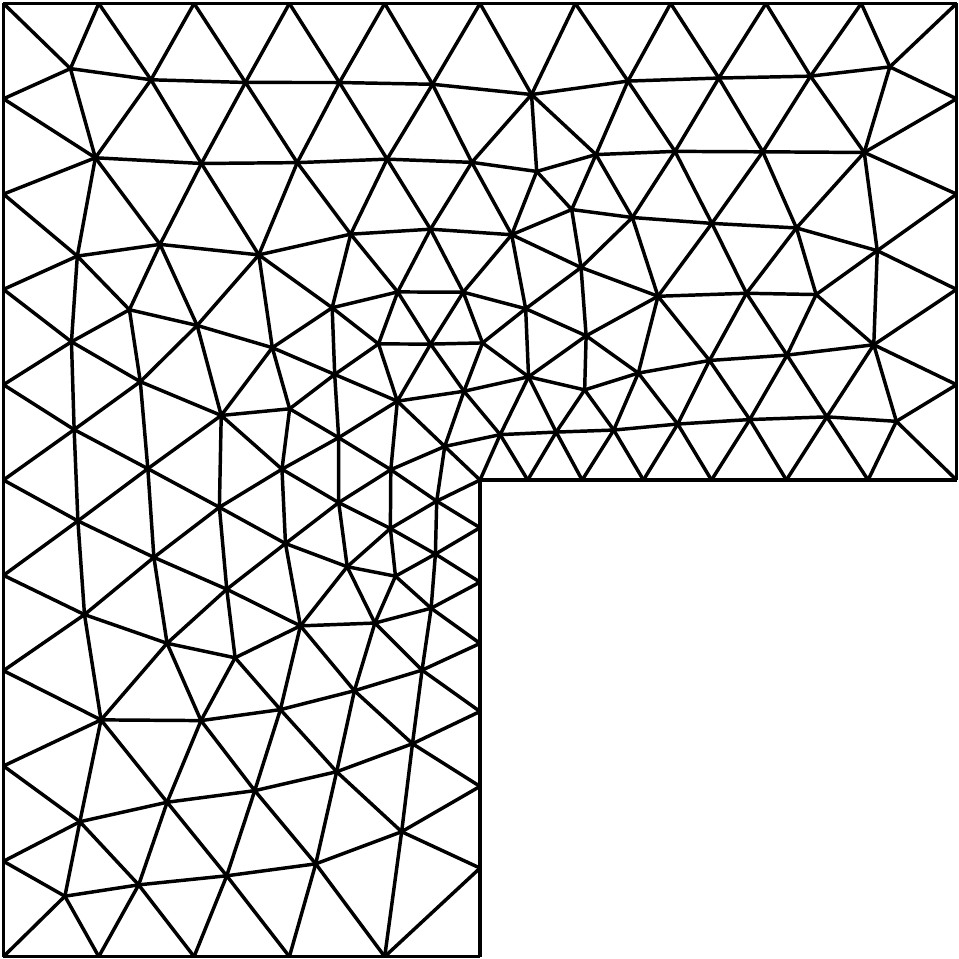}
  \hspace{25pt}
  \includegraphics[width=0.4\textwidth]{./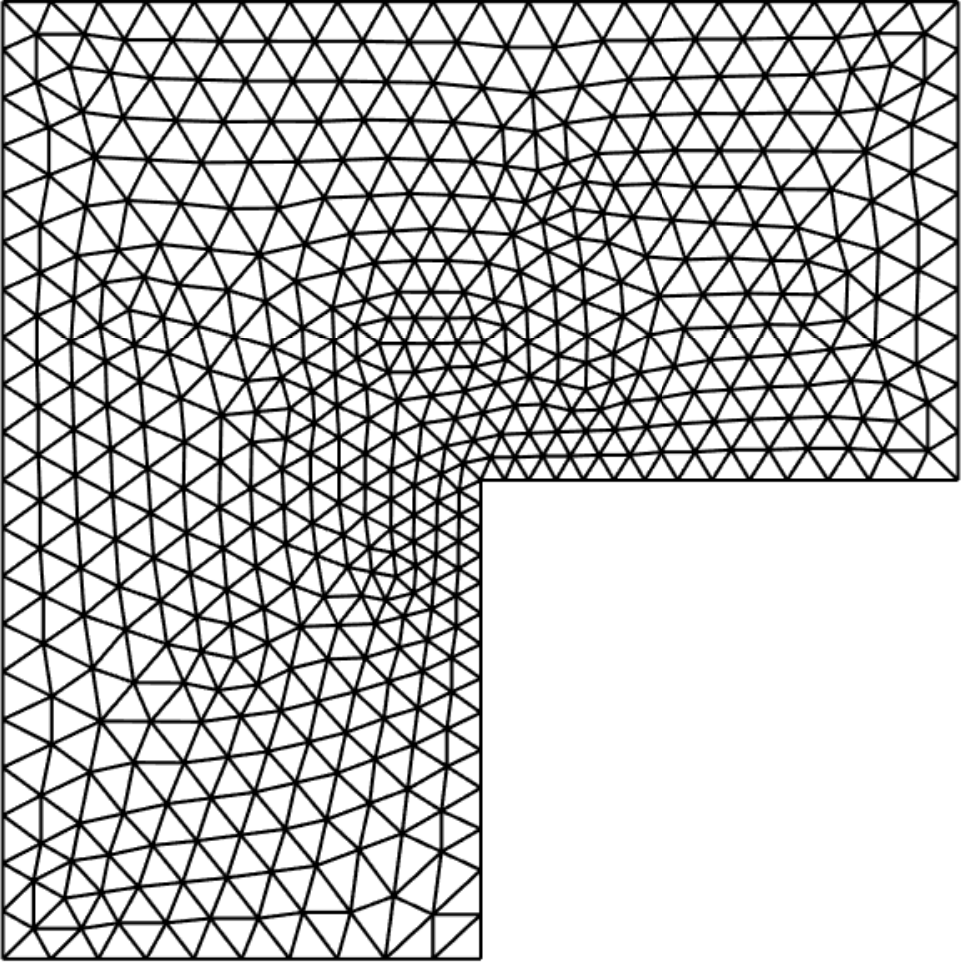}
  \caption{The triangular meshes with 250 elements (left) and
  1000 elements for Example 3.}
  \label{fig:Lshapetriangulation}
\end{figure}

\begin{table}
  \centering
  \renewcommand\arraystretch{1.2}
  \begin{tabular}{p{0.3cm}| p{0.2cm} p{1.8cm} p{0.8cm} p{1.8cm}
    p{0.8cm} p{1.8cm} p{0.8cm} p{1.8cm} p{0.8cm}}
    \hline\hline
    $m$ & & $\|e_u\|_{L^2(\Omega)} $ & order & $\enorm{e_u}_u$ & order
    & $\| e_{\bmr{p}} \|_{\bmr{L}^2(\Omega)} $ & order &
    $\enorm{e_{\bmr{p}}}_{\bmr{p}}$ & order \\
    \hline
    \multirow{5}{*}{$1$}
   & 1 & 4.5059e-03 & -    & 1.5490e-01 & -    
       & 3.9382e-02 & -    & 4.2524e-02 & -    \\
   & 2 & 1.3528e-03 & 1.73 & 7.7746e-02 & 0.99 
       & 1.8681e-02 & 1.07 & 2.5539e-02 & 0.73 \\
   & 3 & 4.0795e-04 & 1.73 & 3.8900e-02 & 1.00 
       & 9.3165e-03 & 1.00 & 1.5859e-02 & 0.68 \\
   & 4 & 1.3376e-04 & 1.61 & 1.9563e-02 & 1.00 
       & 4.5537e-03 & 1.03 & 9.9689e-03 & 0.67 \\
   & 5 & 5.2105e-05 & 1.36 & 9.7263e-03 & 1.00 
       & 2.2927e-03 & 1.00 & 6.2815e-03 & 0.67 \\
    \hline
    \multirow{5}{*}{$2$}
   & 1 & 2.2627e-03 & -    & 9.3186e-03 & -    
       & 3.4672e-02 & -    & 5.1619e-02 & - \\
   & 2 & 6.8183e-04 & 1.73 & 2.6548e-03 & 1.81 
       & 1.6061e-02 & 1.11 & 2.9373e-02 & 0.81 \\
   & 3 & 2.3956e-04 & 1.51 & 9.2329e-04 & 1.52 
       & 8.0869e-03 & 0.99 & 1.8481e-02 & 0.67 \\
   & 4 & 1.0011e-04 & 1.99 & 3.7505e-04 & 1.29 
       & 4.0509e-03 & 1.26 & 1.1383e-02 & 0.68 \\
   & 5 & 4.5381e-05 & 1.13 & 1.7137e-04 & 1.12 
       & 2.0293e-03 & 1.00 & 7.0855e-03 & 0.68 \\
   \hline
   \multirow{5}{*}{$3$}
   & 1 & 2.5557e-03 & -    & 1.1823e-02 & -    
       & 4.1292e-02 & -    & 5.7175e-02 & - \\
   & 2 & 8.6799e-04 & 1.55 & 4.1778e-03 & 1.50 
       & 1.9767e-02 & 1.06 & 3.0635e-02 & 0.90 \\
   & 3 & 3.3653e-04 & 1.36 & 1.4712e-03 & 1.50 
       & 9.6459e-03 & 1.03 & 1.0801e-02 & 0.76 \\
   & 4 & 1.5550e-04 & 1.13 & 5.9787e-04 & 1.29 
       & 4.9361e-05 & 0.98 & 1.1136e-02 & 0.68 \\
   & 5 & 7.5031e-05 & 1.06 & 2.8188e-04 & 1.08 
       & 2.5011e-05 & 0.99 & 6.9361e-03 & 0.68 \\
   \hline\hline
  \end{tabular}
  \caption{Example 4. The errors $e_u = u - u_h, e_{\bmr{p}} = \bmr{p}
  - \bmr{p}_h$, and the orders of convergence with the spaces
  $\bmr{U}_h^{m} \times \wh{V}_h^m(1 \leq m \leq 3)$.}
  \label{tab:ex4error}
\end{table}

\paragraph{\bf Example 5} We consider a three-dimensional Poisson
problem on a unit cube $\Omega = [0, 1]^3$. The domain is partitioned
into a series of tetrahedral meshes with mesh size $h = 1/5, 1/10,
1/20, 1/40$ by {\tt Gmsh} \cite{geuzaine2009gmsh}. The exact solution
is taken as 
\begin{displaymath}
  u(x, y, z) = \sin(2\pi x) \sin(2\pi y) \sin(2 \pi z),
\end{displaymath}
and the Dirichlet function $g$ and the source term $f$ are taken
suitably. We use the spaces $\bmr{U}_h^m \times \wh{V}_h^m(1 \leq m
\leq 3)$ to approximate $\bmr{p}$ and $u$, respectively. The numerical
results are presented in Tab.~\ref{tab:ex5errorm}. We still observe
the optimal convergence rate for $\bmr{p}_h$ under $\bmr{L}^2$ norm
when $m$ is odd, and all computed convergence orders agree with the
theoretical analysis. 

\begin{table}
  \centering
  \renewcommand\arraystretch{1.2}
  \begin{tabular}{p{0.3cm}| p{0.2cm} p{1.8cm} p{0.8cm} p{1.8cm}
    p{0.8cm} p{1.8cm} p{0.8cm} p{1.8cm} p{0.8cm}}
    \hline\hline
    $m$ & & $\|e_u\|_{L^2(\Omega)} $ & order & $\enorm{e_u}_u$ & order
    & $\| e_{\bmr{p}} \|_{\bmr{L}^2(\Omega)} $ & order &
    $\enorm{e_{\bmr{p}}}_{\bmr{p}}$ & order \\
    \hline
   \multirow{5}{*}{$1$} 
   & 1 & 2.0159e-01 & -    & 2.6227e-00 & -    
       & 1.4772e-00 & -    & 2.0737e+01 & -    \\
   & 2 & 6.7739e-02 & 1.76 & 1.4117e-00 & 0.89 
       & 4.3453e-01 & 1.80 & 1.0927e+01 & 0.93 \\
   & 3 & 1.8200e-02 & 1.90 & 7.3125e-01 & 0.95 
       & 1.1641e-02 & 1.90 & 5.4683e+00 & 0.99 \\
   & 4 & 4.6456e-03 & 1.96 & 3.6691e-01 & 1.00 
       & 2.9923e-02 & 1.96 & 2.7331e+00 & 1.00 \\
    \hline
    \multirow{5}{*}{$2$}
   & 1 & 2.8293e-02 & -    & 7.6111e-01 & -    
       & 3.6002e-01 & -    & 7.0288e+00 & - \\
   & 2 & 9.1341e-02 & 1.63 & 2.2963e-01 & 1.73 
       & 1.0421e-01 & 1.79 & 1.7895e+00 & 1.97 \\
   & 3 & 2.5926e-03 & 1.82 & 6.1281e-02 & 1.91 
       & 2.8129e-02 & 1.89 & 4.6372e-01 & 1.95 \\
   & 4 & 6.8012e-04 & 1.93 & 1.5021e-02 & 2.01 
       & 7.2823e-03 & 1.95 & 1.1599e-01 & 2.00 \\
   \hline
   \multirow{5}{*}{$3$}
   & 1 & 7.2877e-03 & -    & 1.8326e-01 & -    
       & 1.7658e-01 & -    & 3.0434e+00 & - \\
   & 2 & 7.3997e-04 & 3.30 & 2.1873e-02 & 3.06 
       & 1.3510e-02 & 3.71 & 3.9250e-01 & 2.96 \\
   & 3 & 5.6061e-05 & 3.73 & 2.7168e-03 & 3.28 
       & 9.2336e-04 & 3.87 & 5.1203e-02 & 3.01 \\
   & 4 & 3.6203e-06 & 3.96 & 3.3962e-04 & 3.17 
       & 5.9170e-05 & 3.96 & 6.4123e-03 & 3.00 \\
   \hline\hline
  \end{tabular}
  \caption{Example 5. The errors $e_u = u - u_h, e_{\bmr{p}} = \bmr{p}
  - \bmr{p}_h$, and the orders of convergence with the spaces
  $\bmr{U}_h^m \times \wh{V}_h^m(1 \leq m \leq 3)$.}
  \label{tab:ex5errorm}
\end{table}

\subsection{Efficiency comparison}
The number of the degrees of freedom of a discretized system is a
suitable indicator for the efficiency, as illustrated by Hughes et al
in \cite{hughes2000comparison}. In our method, the accuracy of
$\bmr{p}_h$ determines the convergence behavior of the pressure. Thus,
to show the efficiency of the proposed method, we make a comparison
between the standard least squares discontinuous finite element method
presented in Section~\ref{sec:fems} and the proposed method by
comparing the error of the numerical flux $\bmr{p}_h$. 

For both methods, we select the finite element spaces of equal order
for solving the Poisson problem. Here we solve the problems that are
taken from the Example 1 and Example 5 for two and three dimensional
case, respectively. We implement the two methods on successively
refined meshes. In Fig.~\ref{fig:compared2}, we plot the errors of
numerical flux in the DLS energy norm $\| \cdot \|_{\bmr{p}}$ against
the number of degrees of freedom with $1 \leq m \leq 3$ in two and
three dimension. All convergence orders are in perfect agreement
with the theoretical results.

There are two points notable for us. To achieve the same accuracy, the
proposed method uses much less DOFs than the DLS finite element
method. The saving of number of DOFs is more remarkable for higher
order approximation. For $d=2$, the number of DOFs used in our method
is about $36\%$ of that in DLS method for linear approximation to
achieve the same accuracy. Meanwhile, the number of DOFs used in our
method is about $31\%$ and $27\%$ of the number of DOFs used in DLS
method for $m=2$ and $3$, respectively (see
Fig.~\ref{fig:compared2}). In Fig.~\ref{fig:compared2}, one may see
that the saving of number DOFs for 3D problems is even more
significant than 2D problems. For $d=3$, the percentages of number of
DOFs reduce to about $30\%$, $12\%$, and $5\%$ of that in DLS method
for $m=1$, $2$, and $3$, respectively.

Let us note at last that the numerical flux $\bmr{p}_h$ obtained by
our method is locally irrotational, which is a natural property as the
gradient of a function.

\begin{figure}[!htp]
  \centering
  \includegraphics[width=0.45\textwidth]{./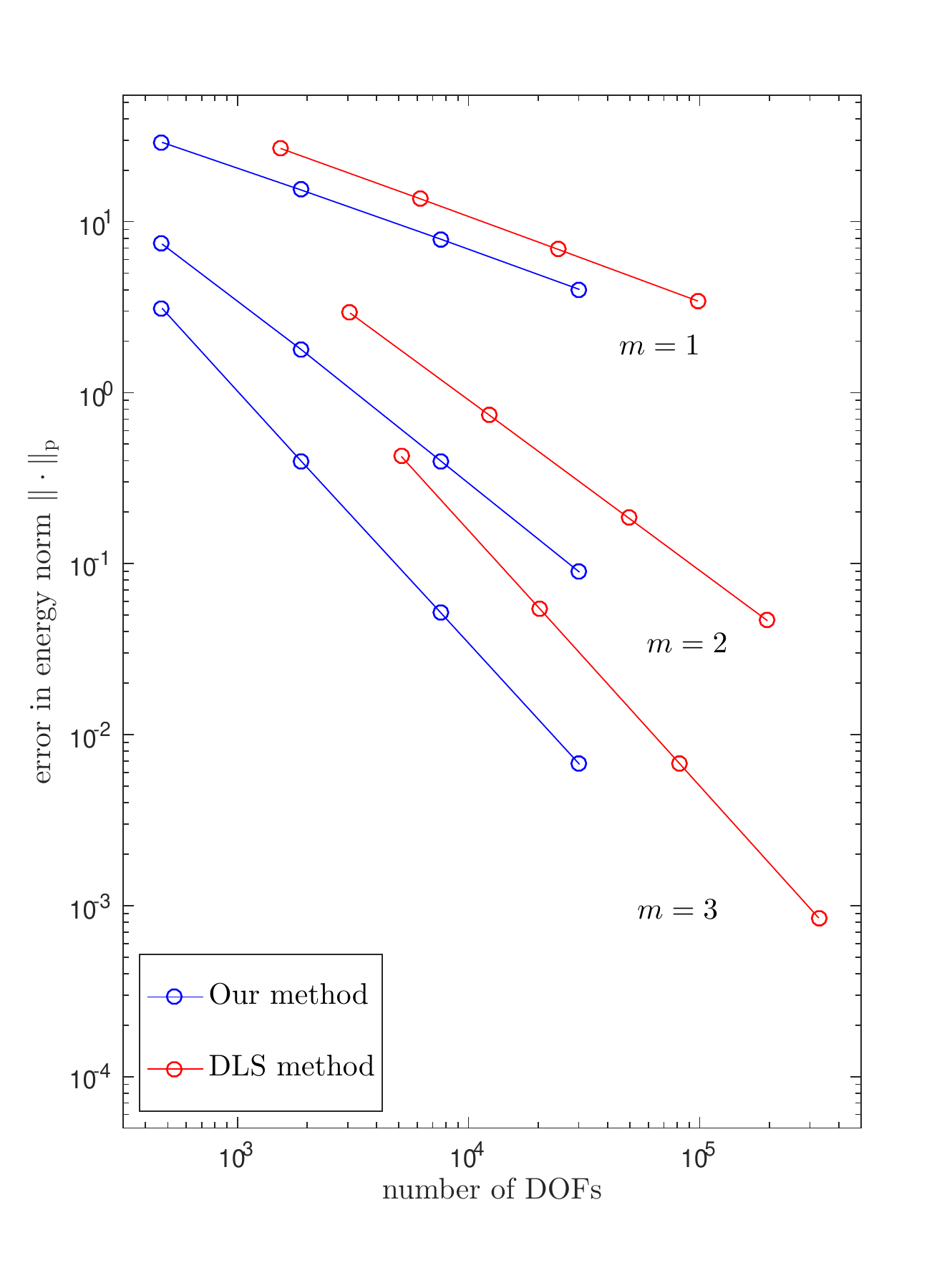}
  \hspace{25pt}
  \includegraphics[width=0.45\textwidth]{./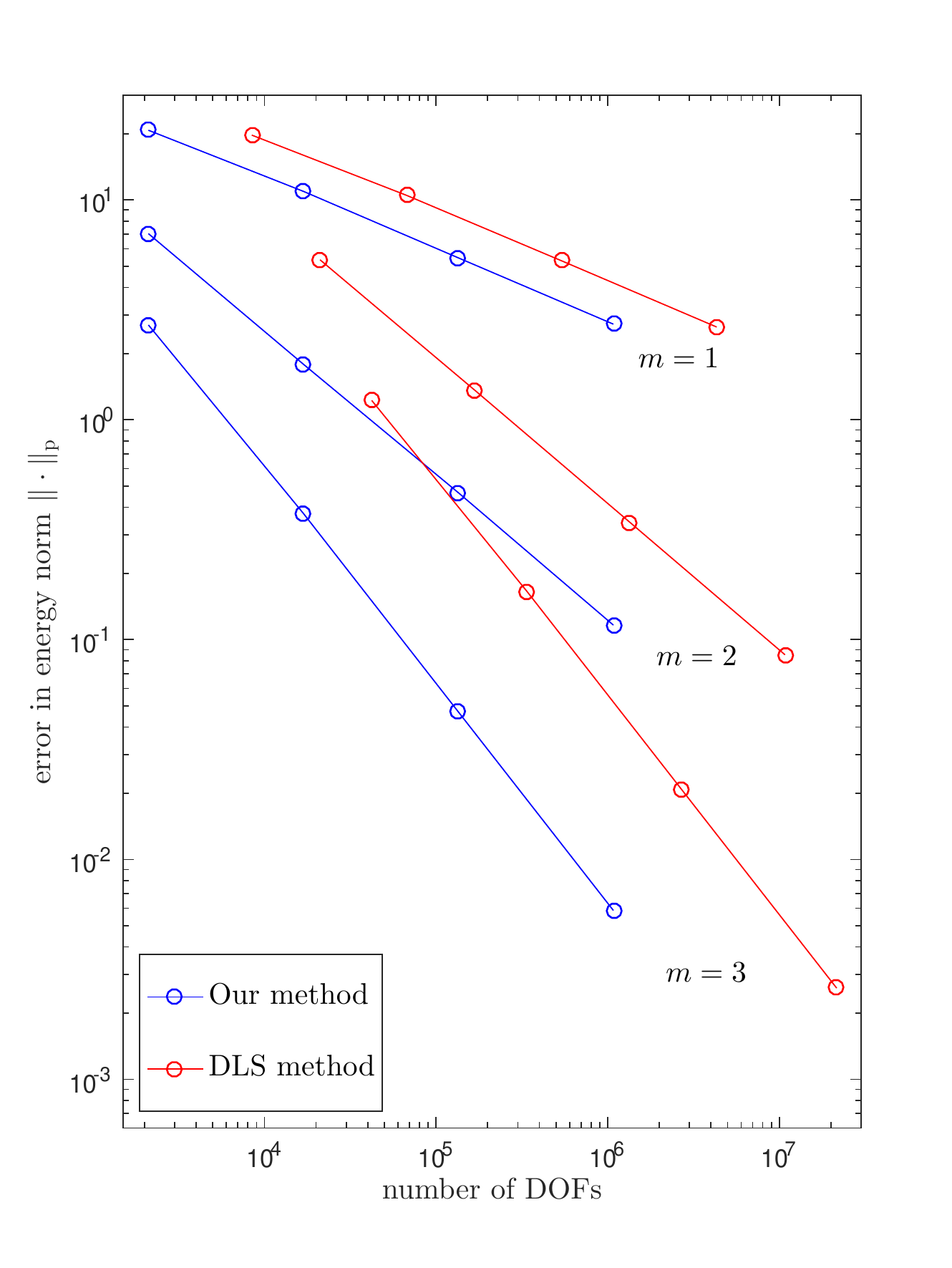}
  \caption{Comparison of the error $\|\bmr{p} - \bmr{p}_h\|_{\bmr{p}}$
  in number of DOFs by two methods with $m =1, 2, 3$ in two
  dimension (left) and three dimension (right).}
  \label{fig:compared2}
\end{figure}


\section{Conclusion}
We proposed a sequential least squares finite element method for the
Poisson equation. The novel piecewisely irrotational approximation
space is constructed by solving local least squares problem and we use
this space to decouple the least squares minimization problem. We
proved the convergences for pressure and flux in $L^2$ norm and energy
norm. By a series of numerical results, not only the error estimates
are verified, but also we exhibited the flexibility and the great
efficiency of our method.

\section*{Acknowledgements}
This research is supported by the National Natural Science Foundation
of China (Grant No. 91630310, 11421110001, and 11421101) and the
Science Challenge Project, No. TZ2016002.


\begin{appendix}
  \section{}
  In Appendix, we present some details of the reconstruction process.
  We first give an example of constructing the element patch in two
  dimensional case. For element $K$, the construction of $S(K)$ with
  $\# S(K) = 15$ is presented in Fig.~\ref{fig:buildpatch}. 
  \begin{figure}
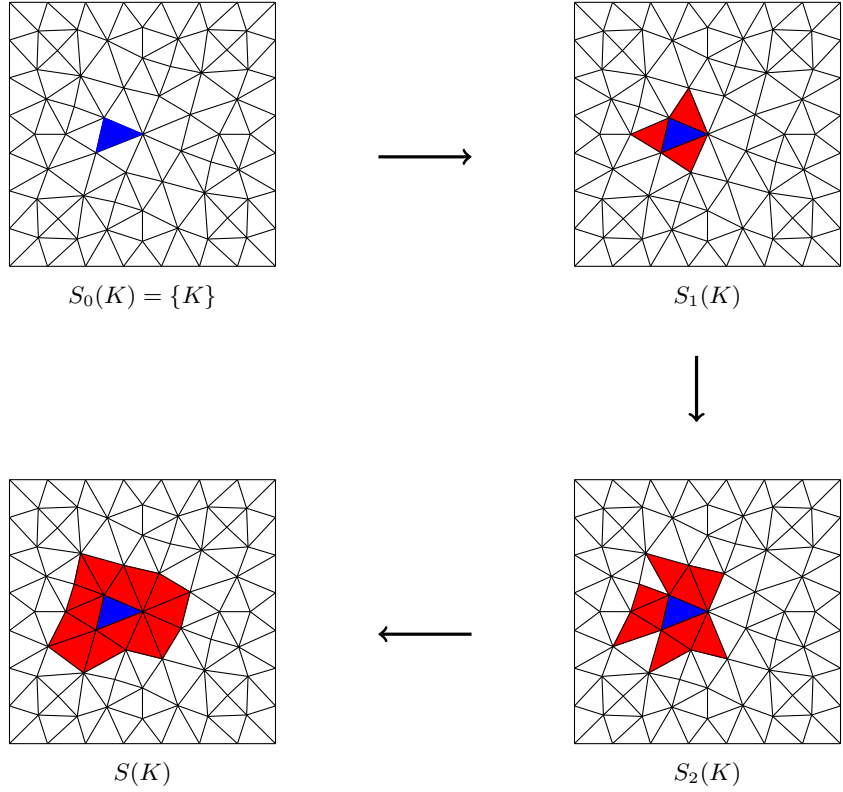

    \centering \captionsetup[subfigure]{labelformat=empty}
    \begin{subfigure}{.35\textwidth}
      \centering
      \begin{tikzpicture}[scale=3.5]
        \input{./figure/m0.tex}
      \end{tikzpicture}
      \caption{$S_0(K)=\{K\}$}
    \end{subfigure}
    \begin{subfigure}{.1\textwidth}
      \centering
      \begin{tikzpicture}[scale=3.5]
        \draw[very thick, ->] (0.35, 0.5) -- (0.7, 0.5);
      \end{tikzpicture}
    \end{subfigure}
    \begin{subfigure}{.35\textwidth}
      \centering
      \begin{tikzpicture}[scale=3.5]
        \input{./figure/m1.tex}
      \end{tikzpicture}
      \caption{$S_1(K)$}
    \end{subfigure}

    \vspace{15pt}
    \hspace{200pt}
    \begin{subfigure}{.1\textwidth}
        \centering
      \begin{tikzpicture}[scale=3.5]
        \draw[very thick, <-, rotate=90] (0.5, 0.5) -- (0.75, 0.5);
      \end{tikzpicture}
    \end{subfigure}

    \vspace{20pt}
    \begin{subfigure}{.35\textwidth}
      \centering
      \begin{tikzpicture}[scale=3.5]
        \input{./figure/m3.tex}
      \end{tikzpicture}
      \caption{$S(K)$}
    \end{subfigure}
    \begin{subfigure}{.1\textwidth}
      \centering
      \begin{tikzpicture}[scale=3.5]
        \draw[very thick, <-] (0.35, 0.5) -- (0.7, 0.5);
      \end{tikzpicture}
    \end{subfigure}
    \begin{subfigure}{.35\textwidth}
      \centering
      \begin{tikzpicture}[scale=3.5]
        \input{./figure/m2.tex}
      \end{tikzpicture}
      \caption{$S_2(K)$}
    \end{subfigure}
    \caption{Build patch for element $K$ with $\# S(K) = 15$}
    \label{fig:buildpatch}
  \end{figure}
  Then we give more details about the space $\bmr{U}_h^m$.  As we
  mentioned before, the operator $\mc R^m$ embeds the space
  $\bmr{C}^0(\Omega) \cap H(\curl^0; \Omega)$ to the piecewise
  irrotational polynomial space of degree $m$ by solving the local
  least squares problem. We define $\bmr{w}_K^i(\bm x) \in
  \bmr{C}^0(\Omega)(1 \leq i \leq d)$ that
  \begin{displaymath}
    \bmr{w}_K^i(\bm x) = \begin{cases}
      \bm e_i, \quad \bm x = \bm x_K, \\
      \bm 0, \quad \bm x \in \widetilde{K}, \quad \widetilde{K} \neq
      K,
    \end{cases} 
    \quad \forall K \in \MTh,
  \end{displaymath}
  where $\bm e_i$ is a $d \times 1$ unit vector whose $i$-th entry is
  $1$.  Then $\bmr{U}_{h}^m = \text{span}\{\bmr{\lambda}_K^i\ |\
  \bmr{\lambda}_K^i = \mc R^m \bmr{w}_K^i,\ 1 \leq i \leq d, \ K \in
  \MTh\}$, and one can write the operator $\mc R^m$ in an explicit
  way: for a function $\bmr{g} = (g^1, \cdots, g^d) \in
  \bmr{C}^0(\Omega) \cap H(\curl^0; \Omega)$ we have
  \begin{displaymath}
    \mc R^m \bmr{g} = \sum_{K \in \MTh} \sum_{i=1}^d g^i(\bm x_{K})
    \bmr{\lambda}_K^i(\bm x).
  \end{displaymath}
  Clearly, the number of DOFs of our method is always $d$ times the
  number of elements in partition.

  Further, we give some details about the computer implementation of
  the reconstructed space. We take the case $d = 2$ to illustrate. We
  first outline the bases of the space $\bmr{S}_m(D)$, it is easily
  verified that for $d=2$, 
  \begin{displaymath}
    \bmr{S}_1(D) = \left\{ \vecd{1}{0}, \vecd{0}{1}, \vecd{x}{0},
    \vecd{0}{y}, \vecd{y}{x} \right\}.
  \end{displaymath}
  Similarly for $m=2, 3$, there is 
  \begin{displaymath}
    \begin{aligned}
      \bmr{S}_2(D) = \bmr{S}_1(D) \cup &\left\{ \vecd{x^2}{0},
      \vecd{2xy}{x^2}, \vecd{y^2}{2xy}, \vecd{0}{y^2} \right\}, \\
      \bmr{S}_3(D) = \bmr{S}_2(D) \cup &\left\{ \vecd{x^3}{0},
      \vecd{3x^2}{y}, \vecd{2xy^2}{2x^2y}, \vecd{y^3}{3xy^2},
      \vecd{0}{y^3} \right\}.
    \end{aligned}
  \end{displaymath}
  Then we shall solve the least squares problem \eqref{eq:lsproblem}
  on every element. We take $K_0$ and $m = 1$ for an instance (see
  Fig.~\ref{fig:Kexample}), and we let $S(K_0) = \left\{ K_0, K_1,
  K_2, K_3 \right\}$ where $K_i(i=1,2,3)$ are the adjacent
  edge-neighbouring elements of $K_0$. We denote by $\bm{x}_i = (x_i,
  y_i)$ the barycenter of the element $K_i$ and we obtain the
  collocation points set $\mc I_{K_0} = \left\{ \bm{x}_0, \bm{x}_1,
  \bm{x}_2, \bm{x}_3 \right\}$. 
  \begin{figure}[htp]
    \centering
    \begin{tikzpicture}[scale=1]
      \coordinate (A) at (1, 0); 
      \coordinate (B) at (-0.5, -0.6);
      \coordinate (C) at (-0.5, 0.8);
      \coordinate (D) at (1.2, 1.5);
      \coordinate (E) at (-2, 0);
      \coordinate (F) at (0.8, -1.5);
      \draw[fill, red] (A) -- (B) -- (C);
      \draw[thick, black] (A) -- (C) -- (B) -- (A);
      \draw[thick, black] (A) -- (D) -- (C);
      \draw[thick, black] (C) -- (E) -- (B);
      \draw[thick, black] (A) -- (F) -- (B);
      \node at(0, 0) {$K_0$}; \node at(1.7/3, 2.3/3) {$K_1$};
      \node at(1.3/3, -2.1/3) {$K_2$}; \node at(-1, 0.2/3) {$K_3$};
    \end{tikzpicture}
    \caption{$K$ and its neighbours}
    \label{fig:Kexample}
  \end{figure}
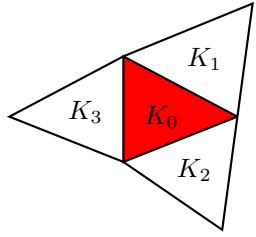
  Then for the function $\bmr{g} = (g^1, g^2) \in \bmr{C}^0(\Omega)
  \cap H(\curl^0; \Omega)$ the least squares problem on $K_0$ reads
  \begin{displaymath}
    \mathop{\arg\min}_{\bm a \in \mb R^5} \sum_{i = 0}^3 \left\| a_0
    \vecd{1}{0} + a_1 \vecd{0}{1} + a_2 \vecd{x_i}{0} + a_3
    \vecd{0}{y_i} + a_4\vecd{y_i}{x_i} - \vecd{g^1(x_i)}{g^2(y_i)}
    \right\|^2.   
  \end{displaymath}
  It is easy to obtain its unique solution 
  \begin{displaymath}
    \bm a = (A^TA)^{-1} A^T\bm q,
  \end{displaymath}
  where 
  \begin{displaymath}
    A = \begin{bmatrix}
      1 & 0 & x_0 & 0   & y_0 \\
      0 & 1 & 0   & y_0 & x_0 \\
      1 & 0 & x_1 & 0   & y_1 \\
      0 & 1 & 0   & y_1 & x_1 \\
      1 & 0 & x_2 & 0   & y_2 \\
      0 & 1 & 0   & y_2 & x_2 \\
      1 & 0 & x_3 & 0   & y_3 \\
      0 & 1 & 0   & y_3 & x_3 \\
    \end{bmatrix}, \qquad 
    \bm{q} = \begin{bmatrix}
      g^1(x_0) \\ g^2(y_0) \\  g^1(x_1) \\ g^2(y_1) \\ 
      g^1(x_2) \\ g^2(y_2) \\  g^1(x_3) \\ g^2(y_3) \\
    \end{bmatrix}.
  \end{displaymath}
  We notice that the matrix $(A^TA)^{-1}A^T$ is independent of the
  function $\bmr g$ and includes all information of the function
  $\bmr{\lambda}_{K_j}^i(j=0,1,2,3,\ i=1,2)$ on the element $K_0$.
  Thus we could store the matrix $(A^TA)^{-1}A^T$ for every element to
  represent our approximation space. The idea of the implementation
  could be adapted to the high-order accuracy case and the high
  dimensional problem without any difficulty.
\end{appendix}

\bibliographystyle{amsplain}
\bibliography{../../ref}

\providecommand{\bysame}{\leavevmode\hbox to3em{\hrulefill}\thinspace}
\providecommand{\MR}{\relax\ifhmode\unskip\space\fi MR }
\providecommand{\MRhref}[2]{%
  \href{http://www.ams.org/mathscinet-getitem?mr=#1}{#2}
}
\providecommand{\href}[2]{#2}
\begin{thebibliography}{10}

\bibitem{antonietti:2013}
P.~F. Antonietti, L.~Beir\~ao~da Veiga, and M.~Verani, \emph{A mimetic
  discretization of elliptic obstacle problems}, Math. Comp. \textbf{82}
  (2013), no.~283, 1379--1400.

\bibitem{arnold1982interior}
D.~N. Arnold, \emph{An interior penalty finite element method with
  discontinuous elements}, SIAM J. Numer. Anal. \textbf{19} (1982), no.~4,
  742--760.

\bibitem{arnold2002unified}
D.~N. Arnold, F.~Brezzi, B.~Cockburn, and L.~D. Marini, \emph{Unified analysis
  of discontinuous {G}alerkin methods for elliptic problems}, SIAM J. Numer.
  Anal. \textbf{39} (2001/02), no.~5, 1749--1779.

\bibitem{Aziz1985least}
A.~K. Aziz, R.~B. Kellogg, and A.~B. Stephens, \emph{Least squares methods for
  elliptic systems}, Math. Comp. \textbf{44} (1985), no.~169, 53--70.
  \MR{771030}

\bibitem{Bensow2005div}
Rickard Bensow and Mats~G. Larson, \emph{Discontinuous least-squares finite
  element method for the div-curl problem}, Numer. Math. \textbf{101} (2005),
  no.~4, 601--617. \MR{2195400}

\bibitem{Bensow2005discontinuous}
Rickard~E. Bensow and Mats~G. Larson, \emph{Discontinuous/continuous
  least-squares finite element methods for elliptic problems}, Math. Models
  Methods Appl. Sci. \textbf{15} (2005), no.~6, 825--842.

\bibitem{Bochev2012locally}
Pavel Bochev, James Lai, and Luke Olson, \emph{A locally conservative,
  discontinuous least-squares finite element method for the {S}tokes
  equations}, Internat. J. Numer. Methods Fluids \textbf{68} (2012), no.~6,
  782--804. \MR{2878612}

\bibitem{Bochev2013nonconforming}
\bysame, \emph{A non-conforming least-squares finite element method for
  incompressible fluid flow problems}, Internat. J. Numer. Methods Fluids
  \textbf{72} (2013), no.~3, 375--402. \MR{3049438}

\bibitem{Bochev1993accuracy}
Pavel~B. Bochev and Max~D. Gunzburger, \emph{Accuracy of least-squares methods
  for the {N}avier-{S}tokes equations}, Comput. \& Fluids \textbf{22} (1993),
  no.~4-5, 549--563.

\bibitem{Bochev1998review}
\bysame, \emph{Finite element methods of least-squares type}, SIAM Rev.
  \textbf{40} (1998), no.~4, 789--837. \MR{1659689}

\bibitem{Bochev2009least}
\bysame, \emph{Least-squares finite element methods}, Applied Mathematical
  Sciences, vol. 166, Springer, New York, 2009.

\bibitem{Bramble1997least}
James~H. Bramble, Raytcho~D. Lazarov, and Joseph~E. Pasciak, \emph{A
  least-squares approach based on a discrete minus one inner product for first
  order systems}, Math. Comp. \textbf{66} (1997), no.~219, 935--955.
  \MR{1415797}

\bibitem{Lung1994stokes}
Ching~Lung Chang, \emph{An error estimate of the least squares finite element
  method for the {S}tokes problem in three dimensions}, Math. Comp. \textbf{63}
  (1994), no.~207, 41--50. \MR{1234425}

\bibitem{ciarlet2002finite}
P.~G. Ciarlet, \emph{{The Finite Element Method for Elliptic Problems}},
  Classics in Applied Mathematics, vol.~40, Society for Industrial and Applied
  Mathematics (SIAM), Philadelphia, PA, 2002, Reprint of the 1978 original
  [North-Holland, Amsterdam; MR0520174 (58 \#25001)].

\bibitem{geuzaine2009gmsh}
C.~Geuzaine and J.~F. Remacle, \emph{Gmsh: {A} 3-{D} finite element mesh
  generator with built-in pre- and post-processing facilities}, Internat. J.
  Numer. Methods Engrg. \textbf{79} (2009), no.~11, 1309--1331.

\bibitem{girault1986finite}
Vivette Girault and Pierre~Arnaud Raviart, \emph{Finite element methods for
  navier-stokes equations: Theory and algorithms}, Springer-Verlag, 1986.

\bibitem{hughes2000comparison}
Thomas J.~R. Hughes, Gerald Engel, Luca Mazzei, and Mats~G. Larson, \emph{A
  comparison of discontinuous and continuous {G}alerkin methods based on error
  estimates, conservation, robustness and efficiency}, Discontinuous {G}alerkin
  methods ({N}ewport, {RI}, 1999), Lect. Notes Comput. Sci. Eng., vol.~11,
  Springer, Berlin, 2000, pp.~135--146. \MR{1842169}

\bibitem{Jiang1993optimal}
Bo-Nan Jiang and Louis~A. Povinelli, \emph{Optimal least-squares finite element
  method for elliptic problems}, Comput. Methods Appl. Mech. Engrg.
  \textbf{102} (1993), no.~2, 199--212.

\bibitem{li2017discontinuous}
R.~Li, P.~B. Ming, Z.~Y. Sun, F.~Y. Yang, and Z.~J. Yang, \emph{A discontinuous
  {G}alerkin method by patch reconstruction for biharmonic problem}, accepted
  by Journal of Computational Mathematics, arXiv:1712.10103 (2017).

\bibitem{li2016discontinuous}
R.~Li, P.~B. Ming, Z.~Y. Sun, and Z.~J. Yang, \emph{An arbitrary-order
  discontinuous {G}alerkin method with one unknown per element},
  arXiv:1803.00378 (2018).

\bibitem{li2012efficient}
R.~Li, P.~B. Ming, and F.~Tang, \emph{An efficient high order heterogeneous
  multiscale method for elliptic problems}, Multiscale Model. Simul.
  \textbf{10} (2012), no.~1, 259--283.

\bibitem{Mitchell2013collection}
William~F. Mitchell, \emph{A collection of 2{D} elliptic problems for testing
  adaptive grid refinement algorithms}, Appl. Math. Comput. \textbf{220}
  (2013), 350--364.

\bibitem{Mitchell2015high}
William~F. Mitchell, \emph{How high a degree is high enough for high order
  finite elements?}, Procedia Computer Science \textbf{51} (2015), 246 -- 255,
  International Conference On Computational Science, ICCS 2015.

\bibitem{Pehlivanov1994least}
A.~I. Pehlivanov, G.~F. Carey, and R.~D. Lazarov, \emph{Least-squares mixed
  finite elements for second-order elliptic problems}, SIAM J. Numer. Anal.
  \textbf{31} (1994), no.~5, 1368--1377. \MR{1293520}

\bibitem{talischi2012polymesher}
C.~Talischi, G.~H. Paulino, A.~Pereira, and I.~F.~M. Menezes, \emph{{\tt
  {P}oly{M}esher}: a general-purpose mesh generator for polygonal elements
  written in {M}atlab}, Struct. Multidiscip. Optim. \textbf{45} (2012), no.~3,
  309--328.

\bibitem{Ye2018discontinuous}
Xiu Ye and Shangyou Zhang, \emph{A discontinuous least-squares finite-element
  method for second-order elliptic equations}, International Journal of
  Computer Mathematics \textbf{96} (2019), no.~3, 557--567.

\bibitem{zienkiewicz2003discontinuous}
O.~C. Zienkiewicz, R.~L. Taylor, S.~J. Sherwin, and J.~Peir\'o, \emph{On
  discontinuous {G}alerkin methods}, Internat. J. Numer. Methods Engrg.
  \textbf{58} (2003), no.~8, 1119--1148.

\end{thebibliography}

\end{document}